\documentclass[11pt, reqno]{amsart}

\usepackage{amsmath}
\usepackage{amssymb}
\usepackage{amsfonts}
\usepackage{amsbsy}
\usepackage{mathrsfs}
\usepackage{relsize}

\usepackage{dsfont}
\usepackage{mathtools}
\mathtoolsset{showonlyrefs}
\usepackage{enumitem}

\usepackage{enumitem}
\usepackage{xcolor}
\usepackage{tikz}

\usepackage[
  left=2cm,
  right=2cm
]{geometry}

\numberwithin{equation}{section}
\usepackage[scaled]{helvet}

\newtheorem{theorem}{Theorem}[section]
\newtheorem{lemma}[theorem]{Lemma}
\newtheorem{proposition}[theorem]{Proposition}

\newtheoremstyle{remarkstyle}
{}{}{}{ }{\bfseries}{.}{ }{\thmname{#1}\thmnumber{ #2}\thmnote{ (#3)}}
\theoremstyle{remarkstyle}
\newtheorem{remark}{Remark}[section]
\newtheorem{definition}{Definition}[section]

\newcommand{\R}{\mathbb R}
\newcommand{\C}{\mathbb C}

\definecolor{lime}{HTML}{A6CE39}
\DeclareRobustCommand{\orcidicon}{
	\begin{tikzpicture}
	\draw[lime, fill=lime] (0,0)
	circle [radius=0.16]
	node[white] {{\fontfamily{qag}\selectfont \tiny ID}};
	\draw[white, fill=white] (-0.0625,0.095)
	circle [radius=0.007];
	\end{tikzpicture}
	\hspace{-2mm}
}
\foreach \x in {A, B, C}{\expandafter\xdef\csname orcid\x\endcsname{\noexpand\href{https://orcid.org/\csname orcidauthor\x\endcsname}
			{\noexpand\orcidicon}}
}

\usepackage[colorlinks, linkcolor=venetianred, citecolor=blue, urlcolor=Cblue, hypertexnames=false]{hyperref}
\definecolor{Cblue}{rgb}{0.50,0.85,0.85}
\definecolor{venetianred}{rgb}{0.78, 0.03, 0.08}

\title[Inhomogeneous NLS]
{Normalized Standing Waves for the Focusing Inhomogeneous Schr\"odinger Equation
with Spatially Growing Nonlinearity
}

\author[Mohamed Majdoub]{Mohamed Majdoub\orcidB{}}
\author[Tarek Saanouni]{Tarek Saanouni\orcidC{}}

\address[M. Majdoub]{Department of Mathematics, College of Science, Imam Abdulrahman Bin Faisal University, P. O. Box 1982, Dammam, Saudi Arabia.\newline
	Basic and Applied Scientific Research Center, Imam Abdulrahman Bin Faisal University, P.O. Box 1982, 31441, Dammam, Saudi Arabia.}
\email{\sl\textcolor{cyan}{mmajdoub@iau.edu.sa}}
\email{\sl\textcolor{cyan}{med.majdoub@gmail.com}}
\address[T. Saanouni]{Department of Mathematics, College of Science, Qassim University, Buraydah, Kingdom of Saudi Arabia.\newline
	University of Tunis El Manar, Faculty of Sciences of Tunis, LR03ES04 partial differential equations, 2092 Tunis, Tunisia.}
\email{\sl\textcolor{cyan}{t.saanouni@qu.edu.sa}}
\email{\sl\textcolor{cyan}{tarek.saanouni@ipeiem.rnu.tn}}

\subjclass[2020]{Primary: 35Q55; 
Secondary: 35B35, 35B44, 35J60.
}
\keywords{Inhomogeneous Schr\"odinger equation; 
ground states; 
orbital stability; 
blow-up; 
variational methods;
nonlinear equations.
}

\begin{document}
	
\begin{abstract}
We study the focusing inhomogeneous nonlinear Schr\"odinger equation
$$
i\partial_t u + \Delta u = -|x|^b |u|^{p-1}u \quad \text{in } \mathbb{R}^N,
$$
with $b>0$ and $p>1$. Due to the spatial growth of the nonlinearity, standard
compactness arguments do not apply and new difficulties arise.

We first characterize ground state standing waves via a variational approach
on the Nehari manifold and we establish some sharp stability and instability properties.
In the $L^2$-subcritical regime, we prove the existence of normalized ground
states by solving a constrained energy minimization problem in the radial
energy space, and we show that the resulting set of minimizers is orbitally
stable under the flow. In contrast, in the $L^2$-critical and supercritical
regimes, ground state standing waves are shown to be strongly unstable by
finite-time blow-up.

Our results extend classical stability and instability theory for nonlinear
Schr\"odinger equations to the case of spatially growing inhomogeneous
nonlinearities.
\end{abstract}

        \date{\today}
	\maketitle	

    \section{Introduction}
\label{S1}
\setcounter{equation}{0}

In this paper, we consider the following focusing inhomogeneous nonlinear
Schr\"odinger equation
\begin{align} \label{INLS}
i \partial_t u + \Delta u = - |x|^b |u|^{p-1} u, \quad (t,x) \in \R_+ \times \R^N,
\end{align}
where $u=u(t,x)$ is a complex-valued function, $b\in\R$, and $p>1$.

Equation~\eqref{INLS} is a particular case of a broader class of
inhomogeneous nonlinear Schrödinger equations in which the spatial weight
$-|x|^b$ is replaced by a general function $K(x)$. Such models naturally
arise in several physical contexts where the medium is spatially inhomogeneous.
In nonlinear optics, for example, spatially varying refractive indices or
inhomogeneous gain/loss profiles lead to effective nonlinear Schrödinger
equations with spatial weights representing nonuniform media
\cite{Gill,GS}. Related models appear in the study of
optical beam propagation in nonuniform plasmas \cite{Gill} and in
Bose–Einstein condensates under spatially dependent interactions
\cite{Strauss,GS}. From a mathematical point of view, the presence of a spatially dependent coefficient
in the nonlinear term profoundly modifies both the variational structure and the
qualitative behavior of solutions. In particular, the weighted nonlinearity affects
the existence and stability of standing waves and plays a decisive role in the long-time
dynamics, which has motivated a substantial literature on well-posedness, scattering,
and blow-up for inhomogeneous nonlinear Schrödinger equations
\cite{Cazenave,CG-DCDS-B,Farah}. Moreover, several classical tools available in the
homogeneous case; such as arguments based on translation invariance or uniform coercivity;
are no longer applicable, and the analysis becomes strongly dependent on the behavior
of the weight at infinity.

The Cauchy problem for \eqref{INLS} with $b<0$, corresponding to spatially decaying
nonlinearities, has attracted considerable attention over the past decades (see,
for instance, \cite{GS, Guzman, Dinh, AT, KLS, Dinh-NA, Farah, BL, FG-JDE,
FG-BBMS, Campos, Dinh-2D, MMZ, CFGM, DK-SIAM, Murphy} and the references therein).
In this regime, the decay of $|x|^b$ allows one to exploit weighted Sobolev
embeddings and concentration--compactness techniques to study well-posedness,
scattering, and blow-up phenomena. We also refer to \cite{CPAM2021} for recent
results on blow-up and strong instability of ground states for \eqref{INLS}.

The case $b>0$, which corresponds to spatially growing nonlinearities, is
substantially more delicate. Indeed, the growth of the weight $|x|^{b}$ at infinity
prevents a direct control of the potential energy by means of standard Sobolev
embeddings or Hardy-type inequalities in the energy space $H^{1}(\R^{N})$.
As a result, both the local and global analysis of \eqref{INLS} require refined
estimates that are adapted to the weighted structure of the nonlinearity and,
in particular, to the radial framework.

Equation~\eqref{INLS} enjoys the following scaling invariance:
$$
u_{\lambda}(t,x)\coloneqq \lambda^{\frac{2 + b}{p - 1}}u(\lambda^{2}t,\lambda x),
\quad \lambda >0.
$$
A direct computation yields
$$
\| u_{\lambda}(0)\|_{\dot{H}^{s}}
= \lambda^{s + \frac{2 + b}{p - 1} - \frac{N}{2}}\| u_{0}\|_{\dot{H}^{s}},
$$
so that the scaling critical regularity is $\dot{H}^{s_{c}}$, where
$$
s_{c}\coloneqq \frac{N}{2} -\frac{2 + b}{p - 1}.
$$
We denote by
\begin{equation}
    \label{pc}
    p_c := 1 + \frac{4+2b}{N}
\end{equation}
the mass-critical exponent, and by
\begin{equation}
    \label{pcc}
    p^c :=
\begin{cases}
1 + \frac{4+2b}{N-2}, & N\ge 3,\\[6pt]
\infty, & N=1,2.
\end{cases}
\end{equation}
The case $s_c=0$ corresponds to $p=p_c$ (mass-critical), while $s_c=1$
corresponds to $p=p^c$ when $N\ge 3$ (energy-critical). When $0<s_c<1$, that is,
when $p_c<p<p^c$ for $N\ge 3$, we say that \eqref{INLS} is intercritical
(mass-supercritical and energy-subcritical).

Using a Gagliardo--Nirenberg type inequality together with energy methods (see
\cite[Theorem~3.3.9, p.~71]{Cazenave}), Chen and Guo
\cite{CG-DCDS-B, Chen, Chen-CMJ, CG-AM} proved local well-posedness in
$H^1_{\rm rad}(\R^N)$ under the assumptions
\begin{align} \label{cond-CG}
N\geq 2, \quad b>0, \quad p>1+\frac{2b}{N-1}, \quad
p<\frac{N+2}{N-2}+\frac{2b}{N-1} \text{ if } N\geq 3.
\end{align}

As later observed in \cite{DMS2021}, the above theory leaves uncovered the range
$$
\frac{N+2}{N-2}+\frac{2b}{N-1} \leq p \leq \frac{N+2+2b}{N-2},
$$
which is not addressed in
\cite{CG-DCDS-B, Chen, Chen-CMJ, CG-AM}. This gap was subsequently filled in
\cite{DMS2021}, where the authors also developed a long-time dynamical theory for
\eqref{INLS}, including global existence, energy scattering, and finite-time
blow-up.

The main purpose of the present paper is to investigate the existence and the
stability/instability properties of ground state standing waves for \eqref{INLS}
in the case $b>0$.
 By a standing wave, we mean a solution of
\eqref{INLS} of the form
$$
u(t,x)= {\rm e}^{i \omega t}\phi(x),
$$
where $\omega\in\R$ is a real frequency and
$\phi\in H^1_{\rm rad}(\R^N)$ is a nontrivial solution of the stationary equation
\begin{equation}
\label{GS}
-\Delta \phi+\omega\phi=|x|^b\,|\phi|^{p-1}\phi.
\end{equation}

	Define the following $C^2$ functionals on $H^1_{\rm rad}(\R^N)$:
	\begin{eqnarray*}
	 S_\omega(\phi)&=&\frac{1}{2}\int_{\R^N}\, |\nabla\phi(x)|^2 dx+\frac{\omega}{2}\int_{\R^N}\, |\phi(x)|^2 dx-\frac{1}{p+1}\int_{\R^N}\,|x|^b|\phi(x)|^{p+1} dx,\\
	 I_{\omega}(\phi)&=&\int_{\R^N}\, |\nabla\phi(x)|^2 dx+\omega\int_{\R^N}\, |\phi(x)|^2 dx-\int_{\R^N}\,|x|^b|\phi(x)|^{p+1} dx,\\
	 P(\phi)&=&\int_{\R^N}\, |\nabla\phi(x)|^2 dx-\frac{N(p-1)-2b}{2(p+1)}\int_{\R^N}\,|x|^b|\phi(x)|^{p+1} dx.
	\end{eqnarray*}
	Note that if $\omega\leq 0$ then \eqref{GS} has no nontrivial solution, see Remark \ref{rm1}. For $\omega >0$, it was shown in \cite[Proposition 2.3.]{DMS2021} that \eqref{GS} has a unique positive radial solution $Q_\omega\in H^1_{\rm rad}(\R^N)$ if $N\geq 2, b>0$ and $1+\frac{2b}{N-1}<p<\frac{N+2+2b}{N-2}.$
	
	Let $\mathcal{A}_{\omega}$ denote the set of nontrivial solutions of \eqref{GS}, namely
	\begin{equation}
	 \label{A}
	 \mathcal{A}_{\omega}=\bigg\{\, \phi\in H^1_{\rm rad}(\R^N)\backslash\{0\}:\;\;\; \phi \;\;\;\mbox{solves}\;\;\;\eqref{GS}\,\bigg\}.
	\end{equation}
	From \cite{Sin-Will} we know that $\mathcal{A}_{\omega}\neq\emptyset$ for $1+\frac{2b}{N-1}<p<\frac{N+2+2b}{N-2}.$ The set of ground states is defined as
	\begin{equation}
	\label{GSS}
	\mathcal{G}_{\omega}=\bigg\{\, \phi\in \mathcal{A}_{\omega}:\;\;\; S_{\omega}(\phi)\leq S_{\omega}(v),\;\; \forall \;\; v\in \mathcal{A}_{\omega}\,\bigg\}.
	\end{equation}
	Define the Nehari manifold 
	\begin{equation}
	   \label{Neh}
	   \mathcal{N}_{\omega}=\bigg\{\, \phi\in H^1_{\rm rad}(\R^N)\backslash\{0\}:\;\;\; I_{\omega}(\phi)=0\,\bigg\}.
	\end{equation}
	Consider the minimizing problem
	\begin{equation}
	\label{domega}
	d(\omega)=\inf\bigg\{\, S_{\omega}(\phi):\;\;\; \phi\in \mathcal{N}_{\omega}\,\bigg\}.
	\end{equation}
	We also define 
	\begin{equation}
	 \label{M}
	 \mathcal{M}_{\omega}=\bigg\{\, \phi\in \mathcal{N}_{\omega}:\;\;\; S_{\omega}(\phi)=d(\omega)\,\bigg\}.
	\end{equation}
	Our first result gives a complete description of $\mathcal{G}_{\omega}$ and can be stated as follows.
	\begin{theorem}
	 \label{G}
	 Let $N\geq 2, b>0, \omega>0$, and $1+\frac{2b}{N-1}<p<p^c$. Then
	 \begin{enumerate}[label=(\roman*)]
	      \item $\mathcal{M}_{\omega}\neq\emptyset$.
	     \item $\mathcal{G}_{\omega}=\mathcal{M}_{\omega}$.
	     \item $\mathcal{G}_{\omega}=\bigg\{\, {\rm e}^{i\theta}\,Q_{\omega},\;\;\; \theta\in\R\,\bigg\}$ where $Q_\omega\in H^1_{\rm rad}(\R^N)$ is the unique positive solution of \eqref{GS}.
	  
	 \end{enumerate}
	\end{theorem}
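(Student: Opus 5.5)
The plan is the classical Nehari-manifold argument. The one new ingredient is a compactness result in the radial space that replaces the Strauss lemma. That result is a weighted radial embedding: for $N\ge 2$, $b>0$ and $1+\frac{2b}{N-1}<p<p^c$, the map $H^1_{\rm rad}(\R^N)\hookrightarrow L^{p+1}(|x|^b\,dx)$ is compact. I would prove it with the Strauss decay estimate $|\phi(x)|\lesssim |x|^{-\frac{N-1}{2}}\|\phi\|_{L^2}^{1/2}\|\nabla\phi\|_{L^2}^{1/2}$ for $|x|\ge1$. This gives $\int_{|x|>R}|x|^b|\phi|^{p+1}\lesssim R^{\,b-\frac{(N-1)(p-1)}{2}}\|\phi\|_{H^1}^{p+1}$, and the exponent of $R$ is negative exactly when $p>1+\frac{2b}{N-1}$. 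On the ball, the weight is bounded and $p+1<2^*$ is guaranteed since $p<\frac{N+2+2b}{N-2}$ with the weight handled near the origin, so Rellich applies. In the same way one obtains the Gagliardo--Nirenberg type bound $\int|x|^b|\phi|^{p+1}\le C\|\phi\|_{H^1}^{p+1}$, which is the well-posedness ingredient of \cite{DMS2021}.

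For (i), on $\mathcal N_\omega$ we have $S_\omega=\left(\frac12-\frac1{p+1}\right)\int|x|^b|\phi|^{p+1}=\left(\frac12-\frac1{p+1}\right)\|\phi\|_{H^1_\omega}^2$, where $\|\phi\|_{H^1_\omega}^2=\|\nabla\phi\|^2+\omega\|\phi\|^2$. The bound above gives $\|\phi\|_{H^1_\omega}^2\le C\|\phi\|_{H^1_\omega}^{p+1}$ on $\mathcal N_\omega$, so $\|\phi\|_{H^1_\omega}$ is bounded away from $0$ and $d(\omega)>0$. Take a minimizing sequence. It is bounded in $H^1_{\rm rad}$, so we may extract a weak limit $\phi_n\rightharpoonup\phi$, and by compactness $\int|x|^b|\phi_n|^{p+1}\to\int|x|^b|\phi|^{p+1}=\frac{2(p+1)}{p-1}d(\omega)>0$, hence $\phi\ne0$. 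Weak lower semicontinuity gives $I_\omega(\phi)\le0$. If the inequality were strict, I would scale by $\lambda\in(0,1)$ with $I_\omega(\lambda\phi)=0$. Then $S_\omega(\lambda\phi)=\left(\frac12-\frac1{p+1}\right)\lambda^{p+1}\int|x|^b|\phi|^{p+1}<d(\omega)$, a contradiction. So $\phi\in\mathcal M_\omega$, and strong convergence also follows.

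For (ii), the first step is to show that a minimizer is a critical point of $S_\omega$. There is a Lagrange multiplier $\mu$ with $S'_\omega(\phi)=\mu I'_\omega(\phi)$. Pairing with $\phi$ and using $\langle I'_\omega(\phi),\phi\rangle=2I_\omega(\phi)-(p-1)\int|x|^b|\phi|^{p+1}=-(p-1)\int|x|^b|\phi|^{p+1}<0$ together with $\langle S'_\omega(\phi),\phi\rangle=I_\omega(\phi)=0$ forces $\mu=0$. Since we work in the radial space, the principle of symmetric criticality lets us conclude that $\phi$ solves \eqref{GS} weakly in $H^1$, so $\mathcal M_\omega\subset\mathcal A_\omega$. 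Next, every element of $\mathcal A_\omega$ lies in $\mathcal N_\omega$, so $S_\omega(v)\ge d(\omega)$ for all $v\in\mathcal A_\omega$. Hence minimizers are ground states, i.e. $\mathcal M_\omega\subset\mathcal G_\omega$, and the ground state level equals $d(\omega)$. Conversely, if $\psi\in\mathcal G_\omega$, then $\psi\in\mathcal N_\omega$ and $S_\omega(\psi)\le S_\omega(\phi)=d(\omega)$ for any $\phi\in\mathcal M_\omega$, so $\psi\in\mathcal M_\omega$.

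For (iii), take $\phi\in\mathcal M_\omega$. Then $|\phi|\in H^1_{\rm rad}$ satisfies $\|\nabla|\phi|\|\le\|\nabla\phi\|$, so $I_\omega(|\phi|)\le0$. The scaling argument of (i) then shows that equality must hold throughout: $|\phi|\in\mathcal M_\omega$ and $\int|\nabla|\phi||^2=\int|\nabla\phi|^2$. By part (ii), $|\phi|$ solves \eqref{GS}, and the strong maximum principle applied to $-\Delta|\phi|+\omega|\phi|\ge0$ on $\R^N\setminus\{0\}$ gives $|\phi|>0$ there. The equality case in the diamagnetic inequality, together with $|\phi|>0$ on the connected set $\R^N\setminus\{0\}$ (recall $N\ge2$), yields $\phi=e^{i\theta}|\phi|$ for a constant $\theta$. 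Uniqueness of the positive radial solution, \cite[Proposition 2.3]{DMS2021}, then gives $|\phi|=Q_\omega$. Conversely, since $\mathcal M_\omega\ne\emptyset$, we get $Q_\omega\in\mathcal G_\omega$, and $S_\omega$ is phase invariant, so every $e^{i\theta}Q_\omega$ is a ground state.

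The main obstacle is the compact weighted embedding on the full range of $p$, especially near the upper end $p<p^c$. There I would first try to interpolate between the Strauss bound at infinity and Hardy/Sobolev bounds near the origin. A secondary difficulty is justifying the constant-phase claim, which I would handle with the standard argument of Lieb--Loss.
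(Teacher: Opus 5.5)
Your proposal is correct and follows the paper's proof essentially step for step. The shared steps are: Nehari minimization via the compact embedding $H^1_{\rm rad}(\R^N)\hookrightarrow L^{p+1}(|x|^b\,dx)$; the rescaling $\lambda\phi$ with $\lambda\in(0,1)$ to exclude $I_\omega(\phi)<0$; a Lagrange multiplier forced to vanish by $\langle I'_\omega(\phi),\phi\rangle=-(p-1)\int|x|^b|\phi|^{p+1}\,dx$; and the equality case $\|\nabla|\phi|\|_{L^2}=\|\nabla\phi\|_{L^2}$ combined with uniqueness of $Q_\omega$.

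Two comments. First, the paper does not prove the embedding or the bound $\int|x|^b|\phi|^{p+1}\,dx\lesssim\|\phi\|_{H^1}^{p+1}$. It quotes them (Lemmas~\ref{lem-GN-ineq} and~\ref{lem-comp-embe}), and you may do the same.

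If you do prove them, your sentence about the ball is false for $N\ge3$: $p<\frac{N+2+2b}{N-2}$ does not give $p+1<2^*$ once $p\ge\frac{N+2}{N-2}$. The repair is the radial bound $|u(x)|\lesssim|x|^{-\frac{N-2}{2}}\|\nabla u\|_{L^2}$. On $\{|x|<1\}$ it gives $|x|^b|u|^{p+1}\lesssim|x|^{b-\frac{(N-2)(p+1-q)}{2}}\|\nabla u\|_{L^2}^{p+1-q}|u|^{q}$. A $q<2^*$ making this exponent nonnegative exists precisely because $p<\frac{N+2+2b}{N-2}$, and then Rellich in $L^q(B_1)$ gives compactness. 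On annuli, the Strauss $L^\infty$ bound plus Rellich in $L^2$ suffices.

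Second, your strong maximum principle step is more careful than the paper. The paper deduces $|\phi|>0$ from $S_\omega(|\phi|)=d(\omega)>0$, which only yields $|\phi|\not\equiv0$. Since $|x|^b$ is continuous, apply the maximum principle on all of $\R^N$ rather than on $\R^N\setminus\{0\}$. Then $|\phi|$ is also positive at the origin, as needed to invoke Proposition~\ref{prop-unique}.

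Your appeal to symmetric criticality in (ii) and your check that every $e^{i\theta}Q_\omega$ belongs to $\mathcal G_\omega$ likewise supply steps the paper leaves implicit.
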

	\begin{remark}
    \label{rem:11}
\rm 
~\begin{enumerate}[label=(\roman*)]
	      \item Theorem~\ref{G} shows that the variational characterization on the Nehari manifold
is \emph{exactly} the ground state characterization for the stationary problem \eqref{GS}.
In particular, every minimizer of \eqref{domega} is a solution of \eqref{GS}. and conversely,
every ground state must minimize the action on $\mathcal N_\omega$.
This identification is crucial for the dynamical arguments in the potential-well analysis.
\item Part~(iii) implies that the ground state set $G_\omega$ consists of a single orbit under
the gauge symmetry $u\mapsto e^{i\theta}u$.
This reduces the stability/instability analysis to perturbations of a unique positive
radial profile $Q_\omega$.
\item The radial assumption is essential in the present setting $b>0$,
since the weight $|x|^b$ destroys translation invariance and prevents the use of the
usual concentration--compactness arguments in $H^1(\R^N)$.
Compactness is recovered here through the compact embedding
$H^1_{\mathrm{rad}}(\R^N)\hookrightarrow L^{p+1}(\R^N,|x|^b\,dx)$.
\end{enumerate}
\end{remark}
For $\omega>0$, we define the following subsets of $H^1(\R^N)$:
	\begin{eqnarray*}
	\mathcal{K}^+_{\omega}&=&\bigg\{\, \phi\in H^1_{\rm rad}(\R^N):\; \;S_{\omega}(\phi)<S_{\omega}(Q_{\omega})\;\;\mbox{and}\;\; P(\phi)\geq 0\,\bigg\},\\
	\mathcal{K}^-_{\omega}&=&\bigg\{\, \phi\in H^1_{\rm rad}(\R^N):\;\; S_{\omega}(\phi)<S_{\omega}(Q_{\omega})\;\;\mbox{and}\;\; P(\phi)< 0\,\bigg\}.
	\end{eqnarray*}
    We now establish a sharp dichotomy between global existence and finite-time blow-up
for energy solutions of \eqref{INLS}, in the spirit of the potential-well theory
introduced by Payne and Sattinger~\cite{ps}.
\begin{theorem}
\label{Kpm}
Let $N\geq 2, b>0$ and $\omega>0$.
	 ~\begin{enumerate}[label=(\roman*)]
	      \item The sets $\mathcal{K}^+_{\omega}$ and $\mathcal{K}^-_{\omega}$ are invariant under the flow generated by \eqref{INLS}.
	     \item If $u_0\in \mathcal{K}^+_{\omega}$, then the corresponding maximal solution $u$ exists globally in time.
	     \item If $p_c<p<p^c$ and $u_0\in \mathcal{K}^-_{\omega}\cap \Sigma$, then the corresponding maximal solution $u$ blows up in finite time. Here $\Sigma= H^1\cap L^2(|x|^2\,dx)$.
	   \end{enumerate}
\end{theorem}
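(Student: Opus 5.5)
The plan is to follow the Payne--Sattinger potential-well scheme, with $P$ and the identification $d(\omega)=S_\omega(Q_\omega)$ from Theorem~\ref{G} as the main tools. Set $\alpha:=\frac{N(p-1)-2b}{2(p+1)}$, so that $P(\phi)=\|\nabla\phi\|_2^2-\alpha\int|x|^b|\phi|^{p+1}$. The first step is a variational lemma:
\[
S_\omega(Q_\omega)=\inf\{S_\omega(\phi):\ \phi\in H^1_{\rm rad}\setminus\{0\},\ P(\phi)=0\}.
\]
To prove it, I use the mass-preserving scaling $\phi^\lambda(x)=\lambda^{N/2}\phi(\lambda x)$. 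Under this scaling $\|\nabla\phi^\lambda\|_2^2=\lambda^2\|\nabla\phi\|_2^2$ and $\int|x|^b|\phi^\lambda|^{p+1}=\lambda^{(p+1)\alpha}\int|x|^b|\phi|^{p+1}$, and moreover $\frac{d}{d\lambda}S_\omega(\phi^\lambda)=\lambda^{-1}P(\phi^\lambda)$. Note that $(p+1)\alpha>2$ exactly when $p>p_c$.

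For $p>p_c$, the map $\lambda\mapsto S_\omega(\phi^\lambda)$ has a unique maximum, attained where $P(\phi^\lambda)=0$. Comparing with the Nehari characterization \eqref{domega} through the standard combination $S_\omega-\frac{1}{(p+1)\alpha}P$ gives the identity. Both $S_\omega-\frac{1}{(p+1)\alpha}P$ and $S_\omega-\frac1{p+1}I_\omega$ are positive-definite quadratic-type expressions, and $Q_\omega$ lies on both constraints by the Pohozaev identity. The case $p\le p_c$ needs separate handling, which I address below.

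\textbf{Invariance (i).} Let $u_0\in\mathcal K^\pm_\omega$. Mass and energy are conserved, so $S_\omega(u(t))=S_\omega(u_0)<S_\omega(Q_\omega)$ for all $t$. The map $t\mapsto P(u(t))$ is continuous in $H^1_{\rm rad}$, using the continuity of the weighted potential term given by the local theory of \cite{DMS2021}. Hence if $u(t)$ left $\mathcal K^-_\omega$, there would be a time $t_0$ with $P(u(t_0))=0$. In that case either $u(t_0)=0$, which is impossible since $P<0$ is open and the mass is conserved and nonzero, or $u(t_0)$ is an admissible competitor, and the lemma gives $S_\omega(u(t_0))\ge S_\omega(Q_\omega)$, a contradiction. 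For $\mathcal K^+_\omega$ the same argument runs in reverse, treating $u_0=0$ trivially.

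\textbf{Global existence (ii).} If $P(u)\ge0$, then
\[
S_\omega(u)-\frac{1}{(p+1)\alpha}P(u)=\Big(\frac12-\frac1{(p+1)\alpha}\Big)\|\nabla u\|_2^2+\frac\omega2\|u\|_2^2 .
\]
For $p>p_c$ the coefficient of $\|\nabla u\|_2^2$ is positive, which bounds $\|\nabla u(t)\|_2$ uniformly in time. The blow-up alternative then yields global existence. For $p\le p_c$ this coefficient is nonpositive, so I switch to the weighted Gagliardo--Nirenberg inequality on radial functions. In the subcritical case this controls the energy by $\|\nabla u\|_2^{(p+1)\alpha}$ with $(p+1)\alpha<2$, which gives global bounds for all data. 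In the critical case I use the sharp constant together with $S_\omega(u_0)<S_\omega(Q_\omega)$, which forces the mass of $u_0$ to lie below that of $Q_\omega$. This low-$p$ case is where I expect the most care to be needed: when $p\le p_c$ the set $\{P=0\}$ may not characterize $d(\omega)$, so the invariance in (i) must also be checked directly via Nehari-type arguments.

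\textbf{Blow-up (iii).} This is the main obstacle. I use the virial identity for $\Sigma$ data,
\[
\frac{d^2}{dt^2}\|xu(t)\|_2^2=8P(u(t)),
\]
which I take from \cite{DMS2021} or derive by the standard computation with the weight $|x|^b$. It then suffices to show $P(u(t))\le-\delta<0$ uniformly in $t$. Fix $t$ and set $f(\lambda)=S_\omega(u^\lambda)$, where $u=u(t)$. Since $P(u)<0$, the critical point $\lambda_*$ satisfies $\lambda_*<1$, and $P(u^{\lambda_*})=0$. By the lemma, $f(\lambda_*)\ge S_\omega(Q_\omega)$. The function $\lambda\mapsto f(\lambda)$ behaves well in the variable $\mu=\lambda^2$ on $[\lambda_*,1]$, and the standard computation (Le Coz's argument) gives
\[
P(u)\le f(1)-f(\lambda_*)\le S_\omega(u_0)-S_\omega(Q_\omega)=:-\delta .
\]
Consequently $\|xu(t)\|_2^2\le C+Ct-4\delta t^2$, which becomes negative in finite time, and therefore $u$ blows up.
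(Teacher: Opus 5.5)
For $p_c<p<p^c$ your argument is correct and follows the paper's route. You identify $d(\omega)=S_\omega(Q_\omega)$ with $\inf\{S_\omega(\phi):\ \phi\neq0,\ P(\phi)=0\}$. Part (i) then follows from continuity of $t\mapsto P(u(t))$. Part (ii) follows from the coercivity of $S_\omega-\frac1BP=(\frac12-\frac1B)\|\nabla u\|_2^2+\frac\omega2\|u\|_2^2$ on $\{P\ge0\}$, with $B=(p+1)\alpha$. Part (iii) follows from the virial identity together with a uniform negative bound on $P(u(t))$.

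The differences from the paper are minor:
\begin{itemize}
\item The paper simply cites Ibrahim--Masmoudi--Nakanishi for the characterization, while you derive it from the fiber $\lambda\mapsto\phi^\lambda$. As written, you only justify the easy inequality (through $Q_\omega$). The reverse inequality needs one more line: $I_\omega(\phi^\lambda)\to\omega\|\phi\|_2^2>0$ as $\lambda\to0$ and $I_\omega(\phi^\lambda)\to-\infty$ as $\lambda\to\infty$. So the fiber meets $\mathcal N_\omega$, and $d(\omega)\le\max_\lambda S_\omega(\phi^\lambda)=S_\omega(\phi)$ whenever $P(\phi)=0$.
\item In (iii), the paper proves $d(\omega)=\inf\{S_\omega-\frac12P:\ P\le0\}$ and argues by contradiction along a sequence $t_n\to\infty$. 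Your concavity in $\mu=\lambda^2$ gives the bound directly. In fact it yields $P(u)\le 2\bigl(f(1)-f(\lambda_*)\bigr)$, which implies your inequality.
\end{itemize}

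The genuine problem is the low-$p$ branch, where you defer (i) to a direct ``Nehari-type'' check. For $p<p_c$ no such check can succeed, because (i) is false there.
\begin{itemize}
\item By Lemma~\ref{Stab-Lem}, $d$ is strictly convex. Since $d'(\omega')=\frac12\|Q_{\omega'}\|_2^2$, one has $S_\omega(Q_{\omega'})=d(\omega')+d'(\omega')(\omega-\omega')<d(\omega)$ for every $\omega'\neq\omega$, while $P(Q_{\omega'})=0$. Hence $\inf\{S_\omega:\ P=0\}<d(\omega)$; it is even $-\infty$.
\item Since $B<2$, the dilate $u_0=\lambda^{N/2}Q_{\omega'}(\lambda\,\cdot)$ with $\lambda<1$ close to $1$ satisfies $P(u_0)<0$ and $S_\omega(u_0)<d(\omega)$. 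So $u_0\in\mathcal K^-_\omega\cap\Sigma$, and its solution is global in both time directions by \eqref{GNI} and the conservation laws.
\item If $\mathcal K^-_\omega$ were invariant, then, since this set is closed under conjugation, the time-reversal symmetry $u(t)\mapsto\overline{u(-t)}$ would give $P(u(t))<0$ for all $t\in\R$. The virial identity would then make $\|xu(t)\|_2^2$ a positive, strictly concave function on all of $\R$, which is impossible.
\end{itemize}

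At $p=p_c$, by contrast, (i) is trivial: $B=2$ gives $P=2E$, which is conserved. For (ii) at $p=p_c$ you need $P(u_0)\ge0$, i.e.\ $E(u_0)\ge0$, and not only $S_\omega(u_0)<S_\omega(Q_\omega)$. That is what gives $\frac\omega2\|u_0\|_2^2\le S_\omega(u_0)<\frac\omega2\|Q_\omega\|_2^2$.

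The paper's proof also works only for $p>p_c$. It uses $B>2$ in the bound $\|\nabla u\|_2^2\le\frac{2B}{B-2}E(u_0)$ and in its auxiliary lemma. The characterization it borrows also requires the nonlinear term to scale faster than $\|\nabla u\|_2^2$ under $u\mapsto\lambda^{N/2}u(\lambda\,\cdot)$, which again means $B>2$. So the right move is to restrict (i) to $p_c\le p<p^c$, not to look for a patch. Statement (ii) does survive for $p<p_c$, trivially, since all radial solutions are global.
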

\begin{remark}
\rm 
~\begin{enumerate}[label=(\roman*)]
	      \item Theorem~\ref{Kpm} is a potential-well dichotomy in the spirit of Payne--Sattinger~\cite{ps}.
The functionals $S_\omega$ and $P$ play the role of a Lyapunov threshold and a
virial-type sign functional, respectively.
The invariant decomposition $\mathcal{K}_\omega^+\cup \mathcal{K}_\omega^-$ gives a sharp separation
between global existence and blow-up below the ground state level.
\item In Part~(iii), the assumption $u_0\in \Sigma$ is used to justify the virial identity
and obtain finite-time blow-up.
Since the solutions considered are radial, the condition can often be relaxed using localized virial estimates.
In particular, we also have blow-up for $u_0\notin\Sigma$ under the extra assumption $p\le 5$.

\item The restriction $p>p_c$ in the blow-up statement corresponds to the mass-supercritical
regime, where the virial functional has the correct sign structure.
In the mass-subcritical regime $p<p_c$, the same argument cannot yield blow-up,
and indeed the dynamics are dominated by global existence and stability phenomena.
\end{enumerate}
\end{remark}

Next, we give the definition of stability/instability of standing waves.
\begin{definition}
\label{Stab-Instab}
Let $\omega>0$.
~\begin{enumerate}[label=(\roman*)]
	      \item A standing wave ${\rm e}^{iwt}\phi(x)$ is said to be (orbitally) stable if for any $\varepsilon>0$ there exists $\eta>0$ such that if $u_0\in H^1$ satisfies $\|u_0-\phi\|<\eta$, then the corresponding maximal solution $u(t)$ to \eqref{INLS} exists globally in time with 
    $$
    \sup_{t\in\R}\,\inf_{\theta\in\R}\,\|u(t)-{\rm e}^{i\theta}\phi\|_{H^1}<\varepsilon.
    $$
\item A standing wave ${\rm e}^{iwt}\phi(x)$ is said to be unstable if it is not stable.
\item A standing wave ${\rm e}^{iwt}\phi(x)$ is said to be strongly unstable if for any $\varepsilon>0$ there exists $u_0\in H^1$ such that $\|u_0-\phi\|<\varepsilon$ and the corresponding maximal solution $u(t)$ to \eqref{INLS} blows up in finite time.
\end{enumerate}
\end{definition}
\begin{remark}
\rm
The stability notion in Definition~\ref{Stab-Instab} is the classical \emph{orbital stability}
in $H^1(\R^N)$, which is the natural energy space associated with \eqref{INLS}.
The minimization over the phase parameter,
$$
\inf_{\theta\in\R}\|u(t)-e^{i\theta}\phi\|_{H^1},
$$
accounts for the gauge invariance of \eqref{INLS} under the transformation
$u\mapsto e^{i\theta}u$.
\end{remark}

As an immediate consequence of Theorem~\ref{Kpm}, we derive the following
stability/instability criterion.
\begin{theorem}
\label{Stab-Instab-GS}
Let $N\geq 2, \omega>0,$ and $b>0$.
~\begin{enumerate}[label=(\roman*)]
    \item If $p_c< p<p^c$, then for each $\phi\in \mathcal{G}_{\omega}$, the standing wave ${\rm e}^{iwt}\phi(x)$ is strongly unstable.
    \item If $p=p_c$, $N\geq 3$ and $0<b\leq N-2$, then for each $\phi\in \mathcal{G}_{\omega}$, the standing wave ${\rm e}^{iwt}\phi(x)$ is strongly unstable.
    \item If $1+\frac{2b}{N-1}<p<p_c$, then for each $\phi\in \mathcal{G}_{\omega}$, the standing wave ${\rm e}^{iwt}\phi(x)$ is stable.
\end{enumerate}
\end{theorem}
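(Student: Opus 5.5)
For (i), we take $\phi\in\mathcal G_\omega$; by Theorem~\ref{G} we may assume $\phi=Q_\omega$. We use the $L^2$-invariant scaling $\phi^\lambda(x)=\lambda^{N/2}\phi(\lambda x)$ with $\lambda>1$ and $\lambda\to1$, so that $\phi^\lambda\to\phi$ in $H^1$. Write $\alpha=\frac{N(p-1)-2b}{2}$. Then
\[
S_\omega(\phi^\lambda)=\frac{\lambda^2}{2}\|\nabla\phi\|_2^2+\frac{\omega}{2}\|\phi\|_2^2-\frac{\lambda^{\alpha}}{p+1}\int|x|^b|\phi|^{p+1}\,dx,
\]
and $p>p_c$ is exactly $\alpha>2$. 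Since $\phi$ solves \eqref{GS}, the Pohozaev identity gives $P(\phi)=0$. Hence $\lambda\mapsto S_\omega(\phi^\lambda)$ attains a strict maximum at $\lambda=1$, and $\lambda\frac{d}{d\lambda}S_\omega(\phi^\lambda)=P(\phi^\lambda)<0$ for $\lambda>1$. Consequently $S_\omega(\phi^\lambda)<S_\omega(Q_\omega)$ and $P(\phi^\lambda)<0$, i.e.\ $\phi^\lambda\in\mathcal K^-_\omega$. Because $Q_\omega$ decays exponentially, $\phi^\lambda\in\Sigma$, and Theorem~\ref{Kpm}(iii) then yields finite-time blow-up. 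The only extra input is the Pohozaev identity $P(Q_\omega)=0$. It follows by combining $I_\omega(Q_\omega)=0$ with the identity obtained by testing \eqref{GS} against $x\cdot\nabla Q_\omega$; the decay of $Q_\omega$ justifies this despite the growing weight.

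For (ii), with $p=p_c$ we have $\alpha=2$, so $S_\omega(\phi^\lambda)$ is constant along the scaling and that argument gives nothing. Instead we take $u_0=\lambda Q_\omega$ with $\lambda>1$. At $p_c$ the energy is $E(u)=\frac12\|\nabla u\|_2^2-\frac{1}{p+1}\int|x|^b|u|^{p+1}\,dx=\frac12P(u)$, and $P(Q_\omega)=0$ gives
\[
E(\lambda Q_\omega)=\frac{\lambda^2}{2}\Big(\|\nabla Q_\omega\|_2^2-\frac{2\lambda^{p-1}}{p+1}\int|x|^b Q_\omega^{p+1}\,dx\Big)<0.
\]
Since $u_0\in\Sigma$, the virial identity gives $\frac{d^2}{dt^2}\|xu(t)\|_2^2=8P(u(t))=16E(u_0)<0$, so the solution blows up in finite time. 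The hypotheses $N\ge3$ and $0<b\le N-2$ should be exactly what the local theory requires, namely $p_c=1+\frac{4+2b}{N}$ lying in the admissible range \eqref{cond-CG} or the range of \cite{DMS2021}; I would check this compatibility, which is elementary algebra.

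For (iii) we follow Cazenave--Lions. Set
\[
m(c)=\inf\{E(u): u\in H^1_{\rm rad},\ \|u\|_2^2=c\}.
\]
For $p<p_c$ the weighted radial Gagliardo--Nirenberg inequality gives $\int|x|^b|u|^{p+1}\,dx\lesssim\|\nabla u\|_2^{\alpha}\|u\|_2^{p+1-\alpha}$ with $\alpha<2$, so $E$ is coercive on the mass sphere, and scaling shows $m(c)<0$. The lower bound $p>1+\frac{2b}{N-1}$ is needed for this inequality, via the Strauss radial lemma. A minimizing sequence is bounded in $H^1_{\rm rad}$. By the compact embedding $H^1_{\rm rad}\hookrightarrow L^{p+1}(|x|^b\,dx)$ noted in Remark~\ref{rem:11}, we can pass to the limit in the potential term, and weak lower semicontinuity together with $m(c)<0$ and the strict subadditivity/scaling $m(\theta c)<\theta m(c)$ rule out mass loss, so the minimizer has full mass. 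Stability of the set of minimizers then follows from conservation of mass and energy by the standard contradiction argument. The main obstacle is identifying this minimizer set with $\{e^{i\theta}Q_\omega\}$ for the given $\omega$. A minimizer solves \eqref{GS} for some Lagrange multiplier $\omega(c)>0$; taking $|u|$ and using the uniqueness of positive radial solutions from Theorem~\ref{G}(iii), the minimizers are $e^{i\theta}Q_{\omega(c)}$. Finally, the scaling $Q_\omega(x)=\omega^{\frac{1}{p-1}\cdot\frac{2+b}{2}}Q_1(\sqrt\omega x)$ shows $\|Q_\omega\|_2^2$ is strictly monotone and onto $(0,\infty)$ in $\omega$ when $p<p_c$. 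Choosing $c=\|Q_\omega\|_2^2$ therefore forces $\omega(c)=\omega$, and the stability of $e^{i\omega t}Q_\omega$ follows.
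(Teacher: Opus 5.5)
Your part (i) is the paper's proof: Lemma~\ref{Instab-Lem} uses $e^{N\lambda/2}\phi(e^{\lambda}x)$, which is your scaling after $\lambda\mapsto e^{\lambda}$, and then Theorem~\ref{Kpm}(iii). Parts (ii) and (iii) are correct but argued differently. For (ii), the paper perturbs by $\lambda_n^{1+N/2}\phi(\lambda_n x)$, checks $E<0$ (Lemma~\ref{Mass-Crit}), and quotes \cite[Theorem~1.3]{DMS2021}. You perturb by $\lambda Q_\omega$ and run Glassey's argument directly, using $Q_\omega\in\Sigma$ and $\frac{d^2}{dt^2}\|xu\|_{L^2}^2=8P(u)=16E(u_0)$ at $p=p_c$. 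This is more self-contained, and it rests on the same virial identity in $\Sigma$ that the paper already uses for Theorem~\ref{Kpm}(iii).

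Your guess about where $N\ge 3$ and $0<b\le N-2$ come from is off, though. At $p=p_c$ the local theory only needs $p_c>1+\frac{2b}{N-1}$, i.e.\ $b<2(N-1)$; the upper bound in \eqref{cond-CG} is automatic. Lemma~\ref{Mass-Crit} needs nothing extra either. So these restrictions are inherited from the cited blow-up theorem. Your finite-variance argument never uses them except to ensure that $p_c$ is admissible, so it actually gives (ii) in a wider range.

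For (iii), the paper combines the convexity $d''(\omega)>0$ of Lemma~\ref{Stab-Lem} with a Shatah-type variational argument adapted from \cite{CPAM2021}, details omitted. You instead run the Cazenave--Lions scheme, which is exactly Theorem~\ref{Norm-SW-Subcrit}, and then identify $\mathbb{M}_c$ with $\{e^{i\theta}Q_\omega\}$ for $c=\|Q_\omega\|_{L^2}^2$. Both routes hinge on the same fact: since $d'(\omega)=\frac12\|Q_\omega\|_{L^2}^2$, convexity of $d$ is precisely your strict monotonicity of $\omega\mapsto\|Q_\omega\|_{L^2}^2$. Your route yields a complete proof and links the fixed-frequency and fixed-mass problems. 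Its extra cost is the identification step, which you should write out:
\begin{enumerate}[label=(\alph*)]
\item The multiplier is positive: $E(u)=\mathbf{m}(c)<0$ gives $\omega c=\int|x|^b|u|^{p+1}\,dx-\|\nabla u\|_{L^2}^2>\frac{p-1}{2}\|\nabla u\|_{L^2}^2$.
\item $|u|$ is again a minimizer, and $E(|u|)=E(u)$ forces $\|\nabla|u|\|_{L^2}=\|\nabla u\|_{L^2}$, hence $|u|$ has the same multiplier.
\item The strong maximum principle and Proposition~\ref{prop-unique} then give $|u|=Q_\omega$.
\item Hence $S_\omega(u)=S_\omega(Q_\omega)$, so $u\in\mathcal{G}_\omega$, and Theorem~\ref{G}(iii) yields $u=e^{i\theta}Q_\omega$.
\end{enumerate}
As in the paper, all of this applies only to radial perturbations. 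Compactness, and even finiteness of the weighted potential term, are available only in $H^1_{\rm rad}$.
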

\begin{remark}
\rm 
~\begin{enumerate}[label=(\roman*)]
\item The assumption
$$
1+\frac{2b}{N-1} < p < p_c
$$
implies, in particular, that
$$
N > 1+\frac{b}{2}.
$$
This condition imposes a restriction on the admissible values of $N$ (and consequently on $b$) required to ensure stability.
\item Theorem~\ref{Stab-Instab-GS} provides a sharp stability/instability trichotomy according to the
mass-critical exponent $p_c$.
This extends the classical theory for homogeneous NLS ($b=0$) to the spatially growing
inhomogeneous case $b>0$, where standard compactness and scaling arguments require
significant modifications.
\item The strong instability statements in Parts~(i)--(ii) are proved by constructing
arbitrarily small $H^1$ perturbations of ground states that lie in $\mathcal{K}_\omega^-$,
and then invoking the blow-up mechanism from Theorem~\ref{Kpm}.
This approach is robust and avoids delicate spectral computations.
\end{enumerate}
\end{remark}
We are also interested in \emph{normalized standing waves} for \eqref{INLS}, \emph{i.e.},
standing waves whose mass is prescribed. This question is naturally motivated by the
fact that the mass $\|u(t)\|_{L^2}^2$ is conserved along the flow of \eqref{INLS}.
As a first observation, a scaling argument shows that one can always construct a solution
to \eqref{GS} with a given mass whenever
$$
1+\frac{2b}{N-1}<p<p^c, \qquad p\neq p_c,
$$
where the exponents $p_c$ and $p^c$ are given by \eqref{pc} and \eqref{pcc}, respectively.

\begin{proposition}
\label{scal}
Let $N\geq 2, b>0, $ $1+\frac{2b}{N-1}<p<p^c$ and $p\neq p_c.$ Then for any $c>0$, there exists a solution $(\phi_c, \omega_c)\in H^1_{\rm rad}\times (0,\infty)$ to \eqref{GS} such that
\begin{equation}
    \label{Scal1}
    \|\phi_c\|_{L^2}^2=c.
\end{equation}
Moreover, we have
\begin{equation}
    \label{wc-phic}
    \omega_c = \left(c\| Q\|_{L^{2}}^{-2}\right)^{\frac{2(p - 1)}{N(p_c - p)}},\quad \phi_c(x) = \omega_c^{\frac{2+b}{2(p-1)}} Q\left(\sqrt{\omega_c} x\right),
\end{equation}
where $Q$ is the unique positive radial solution of~\eqref{GS} with $\omega=1$.
\end{proposition}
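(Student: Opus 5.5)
The plan is a pure scaling argument built on the existence of $Q$, the unique positive radial solution of \eqref{GS} with $\omega=1$. By \cite[Proposition 2.3]{DMS2021}, $Q$ exists in $H^1_{\rm rad}(\R^N)$ for $N\ge 2$, $b>0$ and $1+\frac{2b}{N-1}<p<p^c$. Given $\omega>0$, I set
$$\phi(x)=\omega^{\alpha}Q(\sqrt{\omega}\,x)$$
and choose $\alpha$ so that $\phi$ solves \eqref{GS} with frequency $\omega$. Writing $y=\sqrt\omega x$, the three terms scale as follows:
\begin{itemize}
\item $-\Delta\phi(x)=\omega^{\alpha+1}(-\Delta Q)(y)$;
\item $\omega\phi(x)=\omega^{\alpha+1}Q(y)$;
\item $|x|^b|\phi|^{p-1}\phi(x)=\omega^{-b/2}\,\omega^{\alpha p}\,|y|^bQ(y)^p$.
\end{itemize}
Since $Q$ solves \eqref{GS} with $\omega=1$, the equation for $\phi$ holds exactly when $\alpha+1=\alpha p-\frac b2$, that is, $\alpha=\frac{2+b}{2(p-1)}$. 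This is the exponent appearing in \eqref{wc-phic}. Radiality and membership in $H^1$ are clearly preserved by this dilation.

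Next I compute the mass as a function of $\omega$:
$$\|\phi\|_{L^2}^2=\omega^{2\alpha-\frac N2}\|Q\|_{L^2}^2 .$$
The exponent is
$$2\alpha-\frac N2=\frac{2+b}{p-1}-\frac N2=\frac{4+2b-N(p-1)}{2(p-1)}=\frac{N(p_c-p)}{2(p-1)},$$
using \eqref{pc}. Because $p\neq p_c$, this exponent is nonzero. Hence $\omega\mapsto\|\phi\|_{L^2}^2$ is a strictly monotone bijection from $(0,\infty)$ onto $(0,\infty)$:
\begin{itemize}
\item it is increasing when $p<p_c$;
\item it is decreasing when $p>p_c$.
\end{itemize}

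So for each $c>0$ there is a unique $\omega_c>0$ with $\omega_c^{\frac{N(p_c-p)}{2(p-1)}}\|Q\|_{L^2}^2=c$. Explicitly,
$$\omega_c=\left(c\|Q\|_{L^2}^{-2}\right)^{\frac{2(p-1)}{N(p_c-p)}},$$
which is exactly \eqref{wc-phic}. Setting $\phi_c=\omega_c^{\alpha}Q(\sqrt{\omega_c}\,\cdot)$ then gives \eqref{Scal1}.

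There is no real obstacle here. The only points needing care are the exponent bookkeeping, in particular that the weight $|x|^b$ contributes the factor $\omega^{-b/2}$, and the observation that $p\neq p_c$ is precisely what makes the mass map a bijection. When $p=p_c$ the mass is independent of $\omega$, so only the single value $c=\|Q\|_{L^2}^2$ is attained.
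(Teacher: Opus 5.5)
Your proof is correct and is the paper's argument: the paper dilates by $\lambda^{\frac{2+b}{p-1}}Q(\lambda x)$ and computes the mass $\lambda^{\frac{N}{p-1}(p_c-p)}\|Q\|_{L^2}^2$. It then solves for $\lambda$, which is your computation with $\omega=\lambda^2$; your exponent check and your use of $p\neq p_c$ for invertibility match.
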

\begin{remark}
\rm 
~\begin{enumerate}[label=(\roman*)]
\item Proposition~\ref{scal} shows that normalized standing waves for \eqref{INLS} can always be
produced by scaling ground states of \eqref{GS} whenever $p\neq p_c$.
The mass-critical case $p=p_c$ is exceptional because the scaling preserves the $L^2$
norm, so one cannot freely prescribe the mass by dilation.
\item The monotonicity of $\omega_c$ as $c\to 0$ or $c\to\infty$ reflects the competition
between dispersion and the spatially growing focusing nonlinearity.
In particular, in the mass-subcritical regime $p<p_c$, large mass forces large
frequency, while in the mass-supercritical regime $p>p_c$ the behavior is reversed.
\end{enumerate}
\end{remark}

Now, we consider for each $c>0$, 
\begin{equation}
\label{mc}
    \mathbf{m}(c)=\inf\bigg\{\,E(\phi): \;\; \phi\in \mathcal{S}(c)\,\bigg\}\quad\mbox{if}\quad p<p_c,
\end{equation}
and
\begin{equation}
\label{mc1}
\mathbf{m}(c)=\inf\bigg\{\,E(\phi): \;\; \phi\in \mathcal{V}(c)\,\bigg\}\quad\mbox{if}\quad p>p_c,
\end{equation}
where
\begin{equation}
\label{Energ}
E(\phi)=\frac{1}{2}\|\nabla\phi\|_{L^2}^2-\frac{1}{p+1}\int_{\R^N}\,
|x|^b\,|\phi(x)|^{p+1}\,dx,
\end{equation}
\begin{equation}
\label{Sc}
\mathcal{S}(c)=\bigg\{\, \phi\in H^1_{\rm rad}(\R^N):\;\; \|\phi\|_{L^2}^2=c\,\bigg\}.
\end{equation}
and
\begin{equation}
\label{Vc}
\mathcal{V}(c)=\bigg\{\, \phi \in \mathcal{S}(c)\quad\mbox{such that}\quad P(\phi)=0\,\bigg\}.
\end{equation}
Let us denote
\begin{equation}
\label{M-c}
\mathbb{M}_c=\bigg\{\, \phi\in \mathcal{S}(c):\;\;E(\phi)=\mathbf{m}(c)\,\bigg\}\quad\mbox{if}\quad p<p_c, 
\end{equation}
and
\begin{equation}
\label{M-c1}
\mathbb{M}_c=\bigg\{\, \phi\in \mathcal{V}(c):\;\;E(\phi)=\mathbf{m}(c)\,\bigg\}\quad\mbox{if}\quad p>p_c. 
\end{equation}
 We will prove that $\mathbf{m}(c)>-\infty$ in the $L^2$-subcritical regime (see
Section~\ref{S6} below). In contrast, a simple scaling argument shows that
$\mathbf{m}(c)=-\infty$ in the $L^2$-supercritical regime, namely whenever
$$
\max\left(1+\frac{2b}{N-1},\,1+\frac{4+2b}{N}\right)
<p<\frac{N+2+2b}{N-2}.
$$
Indeed, let $\phi\in \mathcal{S}(c)$ and $\mu>0$, and define
$$
\phi_{\mu}(x):=\mu^{N/2}\,\phi(\mu x).
$$
Then $\phi_{\mu}\in \mathcal{S}(c)$ and
$$
E(\phi_{\mu})
=\mu^2\bigg(\frac{1}{2}\|\nabla\phi\|_{L^2}^2
-\frac{\mu^{\kappa}}{p+1}\int_{\R^N}|x|^b|\phi(x)|^{p+1}\,dx\bigg),
$$
where
$$
\kappa:=\frac{N}{2}(p+1)-N-b-2=\frac N2(p-p_c)>0
\qquad\text{since } p>p_c.
$$
Consequently,
$$
E(\phi_{\mu})\to-\infty \quad\mbox{as}\quad \mu\to \infty,
$$
which yields $\mathbf{m}(c)=-\infty$. This divergence justifies the modified
definition of $\mathbf{m}(c)$ in the $L^2$-supercritical regime given in
\eqref{mc1}. See also \cite{Cazenave, Caz-Lions, Luo}.

 \begin{remark}
\rm Note that the assumption
$$
1+\frac{2b}{N-1} < p < 1+\frac{4+2b}{N}
$$
necessarily requires
$$
N > 1+\frac{b}{2}.
$$
Moreover, we have
$$
\max\!\left\{1+\frac{2b}{N-1},\,1+\frac{4+2b}{N}\right\}
=
\left\{
\begin{array}{ll}
1+\frac{4+2b}{N}, & \text{if } N \geq 1+\frac{b}{2}, \\[6pt]
1+\frac{2b}{N-1}, & \text{if } N \leq 1+\frac{b}{2}.
\end{array}
\right.
$$

\end{remark}

We introduce the following notion of orbital stability for normalized standing waves.

\begin{definition}
\label{Orb-Stab-NSW}
Let $c>0$. We say that $\mathbb{M}_c$ is orbitally stable if for any $\epsilon>0$, there exists $\delta>0$ such that for any initial data $u_0$ satisfying
\begin{equation}
\label{Orb-Stab-NSW1}
\inf_{\phi\in \mathbb{M}_c}\,\|u_0-\phi\|_{H^1}<\delta,
\end{equation}
the corresponding solution $u(t)$ to \eqref{INLS} with initial data $u_0$ satisfies
\begin{equation}
\label{Orb-Stab-NSW2}
\inf_{\phi\in \mathbb{M}_c}\,\|u(t)-\phi\|_{H^1}<\epsilon,\quad \forall\;\;t>0.
\end{equation}
\end{definition}
\begin{remark}
\rm
Definition~\ref{Orb-Stab-NSW} provides the natural notion of orbital stability for
normalized standing waves, since the elements of $\mathbb{M}_c$ arise as minimizers of
a variational problem under the mass constraint. The distance is taken with respect to
the whole set $\mathbb{M}_c$, which accommodates the possible lack of uniqueness of
constrained minimizers.
\end{remark}

Our goal is to establish the existence and orbital stability of normalized standing waves
for \eqref{INLS} in the mass-subcritical, mass-critical, and mass-supercritical regimes.
Our first result in this direction is the following.

\begin{theorem}
\label{Norm-SW-Subcrit}
Let $b>0$, $N>1+\frac{b}{2}$, $c>0$, and $1+\frac{2b}{N-1}<p<p_c$. Then, 
~\begin{enumerate}[label=(\roman*)]
\item $\mathbb{M}_c\neq\emptyset$.
    \item $\mathbb{M}_c$ is orbitally stable in the sense of Definition \ref{Orb-Stab-NSW}. 
\end{enumerate}
\end{theorem}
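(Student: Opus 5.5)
The plan is to run the classical Cazenave--Lions scheme, with compactness coming from the radial structure rather than from concentration--compactness. Throughout we use the compact embedding $H^1_{\rm rad}(\R^N)\hookrightarrow L^{p+1}(\R^N,|x|^b\,dx)$ noted in Remark~\ref{rem:11}, valid for $1+\frac{2b}{N-1}<p<p^c$. We also use the weighted Gagliardo--Nirenberg inequality for radial functions underlying the local theory:
\begin{equation}
\int_{\R^N}|x|^b|\phi|^{p+1}\,dx\le C_{GN}\,\|\nabla\phi\|_{L^2}^{\alpha}\,\|\phi\|_{L^2}^{p+1-\alpha},\qquad \alpha=\frac{N(p-1)-2b}{2}.
\end{equation}
The exponent $\alpha$ is forced by scaling. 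It satisfies $\alpha<2$ exactly when $p<p_c$, and $\alpha>0$ since $p>1+\frac{2b}{N-1}>1+\frac{2b}{N}$.

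\textbf{Step 1 (boundedness of minimizing sequences).} For $\phi\in\mathcal S(c)$ the inequality gives
\begin{equation}
E(\phi)\ge \tfrac12\|\nabla\phi\|_{L^2}^2-C c^{\frac{p+1-\alpha}{2}}\|\nabla\phi\|_{L^2}^{\alpha}.
\end{equation}
Since $\alpha<2$, this yields $\mathbf m(c)>-\infty$, and every minimizing sequence is bounded in $H^1$.

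\textbf{Step 2 (negativity of the infimum).} We show $\mathbf m(c)<0$ using the mass-preserving dilation $\phi_\mu(x)=\mu^{N/2}\phi(\mu x)$ from the introduction:
\begin{equation}
E(\phi_\mu)=\frac{\mu^2}{2}\|\nabla\phi\|^2-\frac{\mu^{2+\kappa}}{p+1}\int|x|^b|\phi|^{p+1},\qquad \kappa=\tfrac N2(p-p_c)\in(-2,0).
\end{equation}
As $\mu\to0$ the negative term dominates, so $E(\phi_\mu)<0$ for small $\mu$.

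\textbf{Step 3 (existence, part (i)).} Take a minimizing sequence $(\phi_n)\subset\mathcal S(c)$. Up to a subsequence, $\phi_n\rightharpoonup\phi$ in $H^1_{\rm rad}$, and by compactness $\int|x|^b|\phi_n|^{p+1}\to\int|x|^b|\phi|^{p+1}$. Weak lower semicontinuity then gives $E(\phi)\le\mathbf m(c)<0$, so $\phi\ne0$, and $\|\phi\|_{L^2}^2=:c'\in(0,c]$.

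The main obstacle is excluding $c'<c$, since mass may leak to infinity and $L^2$ is not compactly embedded. We rule this out by scaling in amplitude. For $\theta>1$ and $\|\psi\|_{L^2}^2=c'$, the function $\sqrt\theta\,\psi$ has mass $\theta c'$ and
\begin{equation}
E(\sqrt\theta\psi)=\theta E(\psi)-\frac{\theta^{\frac{p+1}{2}}-\theta}{p+1}\int|x|^b|\psi|^{p+1}<\theta E(\psi).
\end{equation}
Hence $\mathbf m(\theta c')\le\theta\,\mathbf m(c')$. Applying this with $\theta=c/c'$ and $\psi=\phi$ gives
\begin{equation}
\mathbf m(c)\le E(\sqrt{c/c'}\,\phi)<\tfrac{c}{c'}E(\phi)\le \tfrac{c}{c'}\mathbf m(c)<\mathbf m(c)
\end{equation}
when $c'<c$. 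The middle strict inequality uses $\phi\ne0$, and the last uses $\mathbf m(c)<0$. This contradiction forces $c'=c$. Then $\phi_n\to\phi$ in $L^2$, and together with $E(\phi_n)\to E(\phi)$ and convergence of the potential term we get $\|\nabla\phi_n\|\to\|\nabla\phi\|$. So $\phi_n\to\phi$ strongly in $H^1$ and $\phi\in\mathbb M_c$. In fact this shows every minimizing sequence is relatively compact in $H^1_{\rm rad}$.

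\textbf{Step 4 (orbital stability, part (ii)).} We argue by contradiction. Suppose there are $\epsilon_0>0$, radial data $u_{0,n}$ with $\inf_{\phi\in\mathbb M_c}\|u_{0,n}-\phi\|_{H^1}\to0$, and times $t_n$ with $\inf_{\phi\in\mathbb M_c}\|u_n(t_n)-\phi\|_{H^1}\ge\epsilon_0$. Global existence holds because, by mass and energy conservation and Step 1, the $H^1$ norm stays bounded. Radial data stay radial under the flow, and the data are radial because the whole setting is in $H^1_{\rm rad}$.

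By continuity of $E$ and of the mass on $H^1$, we have $\|u_{0,n}\|_{L^2}^2\to c$ and $E(u_{0,n})\to\mathbf m(c)$. Conservation transfers these to $v_n:=u_n(t_n)$. Set $w_n=\sqrt{c}\,v_n/\|v_n\|_{L^2}\in\mathcal S(c)$. Then $\|w_n-v_n\|_{H^1}\to0$ and $E(w_n)\to\mathbf m(c)$, so $(w_n)$ is a minimizing sequence. By Step 3 it has a subsequence converging in $H^1$ to some element of $\mathbb M_c$. This contradicts the choice of $t_n$.
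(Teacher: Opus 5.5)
Your proof is correct and follows essentially the same route as the paper. The GN/Young lower bound and the mass-preserving dilation give $-\infty<\mathbf m(c)<0$; compactness of $H^1_{\rm rad}\hookrightarrow L^{p+1}(|x|^b\,dx)$ together with amplitude scaling by $\sqrt{c/c'}$ rules out loss of mass; and the Cazenave--Lions contradiction argument with mass renormalization gives stability. The differences are minor: you rescale the weak limit $\phi$ directly instead of first proving strict monotonicity of $c\mapsto\mathbf m(c)$, and you explicitly verify global existence and radiality of the perturbed solutions, which the paper leaves implicit.
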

\begin{remark}
\rm
~\begin{enumerate}[label=(\roman*)]
\item The homogeneous case $b=0$ with $p<p_c$ was studied in \cite{Caz-Lions}, where the proof relies on some compactness arguments due to Lions \cite{Lions1, Lions2}.
\item The inhomogeneous case remains less understood. The results of \cite{Lions3} apply, in particular, to the regime $b<0$.
\item To the best of our knowledge, the case $b>0$ has not been previously investigated. It appears to be more delicate, primarily due to the lack of suitable compactness arguments.
\item The embedding \eqref{embed} is clearly not compact in the limiting cases, namely when
$$
p = 1+\frac{2b}{N-1}
\quad \text{or} \quad
p = \frac{N+2+2b}{N-2} \quad \text{if } N \geq 3.
$$
A natural question is whether this lack of compactness can be characterized, even in the homogeneous case $b=0$.
\item Theorem~\ref{Norm-SW-Subcrit} provides a variational construction of normalized ground states
in the radial energy space.
The key point is that in the mass-subcritical regime $p<p_c$ the constrained energy
level $\mathbf{m}(c)$ is finite and negative, which yields compact minimizing sequences and
strong convergence.
\item Orbital stability of $\mathbb{M}_c$ is obtained by a standard contradiction argument:
if stability fails, one constructs a minimizing sequence for $\mathbf{m}(c)$ along the flow,
and then uses compactness of minimizing sequences to recover proximity to $\mathbb{M}_c$.
This is a constrained version of the classical Cazenave--Lions stability method~\cite{Caz-Lions}.
\item The restriction $N>1+\frac b2$ appears naturally in the mass-subcritical range
$1+\frac{2b}{N-1}<p<p_c$, and ensures that the admissible interval of exponents is nonempty.
This is consistent with the dimensional restriction already pointed out in Remark~\ref{rem:11}.
\end{enumerate}

\end{remark}

    This paper is organized as follows. In Section~\ref{S2}, we gather a number of
preliminary tools, including functional inequalities and auxiliary results that
will be used repeatedly in the sequel. Section~\ref{S3} is devoted to the
variational analysis of the stationary problem \eqref{GS}, where we prove the
existence and characterization of ground states and, consequently, establish the
corresponding results for standing waves. In Section~\ref{S4}, we investigate the
dynamical behavior of solutions to \eqref{INLS}, with particular emphasis on the
construction of invariant sets and on the dichotomy between global existence and
finite-time blow-up. Section~\ref{S5} addresses the stability and instability of
standing waves in the $L^2$-subcritical, mass-critical, and $L^2$-supercritical
regimes. In Section~\ref{S6}, we turn to normalized standing waves and study the
constrained variational problem at fixed mass, proving the existence of minimizers
and the orbital stability of the associated set of ground states. Finally,
Section~\ref{sec:concl} contains concluding remarks and a discussion of several
perspectives and open directions suggested by the present work.

	\section{Preliminaries}
\label{S2}
\setcounter{equation}{0}

In this section, we gather several auxiliary results and functional inequalities
that will be  used throughout the paper. For the reader's convenience,
we also recall some basic properties of the energy space and the weighted nonlinear
term appearing in \eqref{INLS}. These preliminary tools provide the analytical
framework for the variational arguments and the dynamical analysis developed in
the subsequent sections.

	\begin{proposition}\cite[Proposition 2.3.]{DMS2021}
	\label{prop-unique}
		Let $N\geq 2, b>0, p>1+\frac{2b}{N-1}$, and $p<\frac{N+2+2b}{N-2}$ if $N\geq 3$. Then there exists a unique positive radial solution to \eqref{GS}.
	\end{proposition}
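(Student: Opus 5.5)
The statement is Proposition~\ref{prop-unique}, which splits into existence and uniqueness, and we treat them in that order. Set $\omega=1$; the general case follows from the scaling $\phi(x)=\omega^{\frac{2+b}{2(p-1)}}Q(\sqrt\omega x)$.

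\emph{Existence.} We minimize $S_1$ on the radial Nehari manifold, or equivalently a Weinstein-type quotient. The key input is the compact embedding $H^1_{\rm rad}(\R^N)\hookrightarrow L^{p+1}(|x|^b\,dx)$ for $1+\frac{2b}{N-1}<p<\frac{N+2+2b}{N-2}$. We obtain it by splitting the domain.
\begin{itemize}
\item On $\{|x|>R\}$, the Strauss radial lemma $|u(x)|\lesssim |x|^{-\frac{N-1}{2}}\|u\|_{H^1}$ gives
$$
\int_{|x|>R}|x|^b|u|^{p+1}\,dx\lesssim R^{\,b-\frac{(N-1)(p-1)}{2}}\|u\|_{H^1}^{p-1}\|u\|_{L^2}^2 .
$$
The exponent is negative exactly when $p>1+\frac{2b}{N-1}$, so this tail is uniformly small.
\item On $\{|x|\le R\}$, the weight is bounded and the exponent is Sobolev-subcritical, so Rellich compactness applies.
\end{itemize}
With this embedding, a minimizing sequence can be normalized on $\mathcal N_1$, has a weak limit, and the standard argument gives a nontrivial minimizer. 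Replacing the minimizer by its absolute value keeps it a minimizer. Removing the Lagrange multiplier in the usual way yields a solution of \eqref{GS}. The strong maximum principle then makes it positive, and elliptic regularity away from the origin makes it smooth.

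\emph{Uniqueness.} This is the main obstacle. Any positive radial solution satisfies the ODE
$$
u''+\tfrac{N-1}{r}u'-u+r^b u^p=0,\qquad u'(0)=0,\qquad u\to0 .
$$
We plan to invoke the uniqueness theorems for radial ground states of $\Delta u + f(r,u)=0$ due to Yanagida, or Kwong--Li type results, which cover weights $K(r)=r^b$. These require a monotonicity condition on a quantity such as
$$
G(r)=r^{\frac{2(N-1)}{p+3}}\,K(r)^{-\frac{2}{p+3}} ,
$$
or, in the form of Shioji--Watanabe, conditions on auxiliary functions built from $r^{N-1}$ and $K$. We would verify that these hold precisely when $p$ lies in the stated range; the upper bound $p<\frac{N+2+2b}{N-2}$ enters through a Pohozaev-type sign condition.

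If the direct verification fails, we fall back on the Emden--Fowler change of variables $t=r^{\alpha}$ with a suitable $\alpha$. The aim is to transform the equation into one with a homogeneous nonlinearity and a radial linear coefficient, so that the known uniqueness results (Kwong; McLeod--Serrin) apply after the reduction. Checking the sign conditions in these criteria is where the computation concentrates.

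Positivity of $u$ and its decay at infinity come from comparing the ODE with $u''-u\approx0$. The weight $r^b u^p$ is negligible at infinity by the radial decay bound, so the solution decays exponentially, which is the regime the uniqueness theorems require.
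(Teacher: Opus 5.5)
The paper does not prove this statement. It quotes it from \cite{DMS2021}, and it quotes the compact embedding the same way (Lemma~\ref{lem-comp-embe}). Your proposal therefore has to stand on its own, and it has two genuine gaps.

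The first gap is in your proof of the compact embedding. On $\{|x|\le R\}$ you assert that the exponent is Sobolev-subcritical. For $N\ge3$, however, the hypotheses allow $\frac{N+2}{N-2}\le p<\frac{N+2+2b}{N-2}$ (a nonempty range because $b>0$). There $p+1\ge\frac{2N}{N-2}$ and $H^1(B_R)\not\hookrightarrow L^{p+1}(B_R)$, and boundedness of the weight does not help. Tellingly, the upper bound on $p$ never enters your existence argument. What is missing is the radial bound at the origin, $|u(x)|\lesssim|x|^{-\frac{N-2}{2}}\|\nabla u\|_{L^2}$, which gives
\[
\int_{|x|<\varepsilon}|x|^b|u|^{p+1}\,dx\lesssim\varepsilon^{\,N+b-\frac{(N-2)(p+1)}{2}}\,\|\nabla u\|_{L^2}^{p+1}.
\]
The exponent is positive exactly when $p<\frac{N+2+2b}{N-2}$. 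On $\varepsilon\le|x|\le R$, radial $H^1$ functions are essentially one-dimensional and compactness is elementary.

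The same issue affects the ODE you write down. In this range an $H^1_{\rm rad}$ solution is not a priori bounded at $0$, and regularity away from the origin does not give $u(0)<\infty$ or $u'(0)=0$. You need $|x|^b|u|^{p-1}\lesssim|x|^{b-\frac{(N-2)(p-1)}{2}}\in L^q_{\rm loc}$ for some $q>\frac N2$, which again holds iff $p<\frac{N+2+2b}{N-2}$, followed by a Brezis--Kato bootstrap.

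The second, more serious, gap is uniqueness, which is the real content of the proposition. You name criteria (Yanagida, Kwong--Li, Shioji--Watanabe) but verify none of their hypotheses, so the claim that they hold precisely in the stated range is unsupported. Your fallback cannot work. With $u(r)=v(t)$, $t=r^\alpha$, the equation becomes
\[
v''+\frac{M-1}{t}v'-\alpha^{-2}t^{\frac{2-2\alpha}{\alpha}}v+\alpha^{-2}t^{\frac{2-2\alpha+b}{\alpha}}v^p=0,\qquad M=\frac{N-2+2\alpha}{\alpha}.
\]
For $b\neq0$ no $\alpha$ makes both coefficients constant: for $\alpha=1+\frac b2$ the weight simply reappears as $t^{-\frac{2b}{2+b}}$ on the linear term. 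So the results of Kwong and McLeod--Serrin, which concern autonomous equations $\Delta u+f(u)=0$, never become applicable.

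What you actually have to do is check a weighted criterion for $g\equiv-1$, $h(r)=r^b$. One option is the criterion built on the generalized Pohozaev identity $\frac{d}{dr}J=G(r)u^2$, where $J=\frac12 au'^2+\beta uu'+\frac12\gamma u^2+\frac{r^ba}{p+1}u^{p+1}$ and
\[
a=r^\sigma,\quad \sigma=\frac{2(p+1)(N-1)-2b}{p+3},\quad \beta=\frac{N-1}{r}a-\frac{a'}{2},\quad \gamma=-a-\beta'+\frac{N-1}{r}\beta .
\]
A direct computation gives $G(r)=r^{\sigma-3}(Ar^2+B)$ with
\[
A=\frac{2b-(N-1)(p-1)}{p+3},\qquad B=\frac{(2N-2+b)\bigl(N+2+2b-(N-2)p\bigr)(\sigma-2)}{2(p+3)^2}.
\]
Thus $A<0$ iff $p>1+\frac{2b}{N-1}$, and for $p<\frac{N+2+2b}{N-2}$ the sign of $B$ is that of $\sigma-2$. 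Hence $G$ is either negative on $(0,\infty)$ or positive then negative. This sign analysis is the step your proposal leaves undone. You also need the behaviour of $J$ as $r\to0$ and $r\to\infty$, which is where $u(0)<\infty$, $u'(0)=0$ and exponential decay are used.
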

	\begin{remark}
\rm The uniqueness of the positive radial solution to \eqref{GS} plays a central role:
it implies that the ground state set $G_\omega$ is exactly the gauge orbit of $Q_\omega$.
This avoids any ambiguity in the threshold level $S_\omega(Q_\omega)$.
\end{remark}

	\begin{lemma}\cite[Lemma 2.4.]{DMS2021}
	\label{lem-GN-ineq}
		Let $N\geq 2$, $b>0$, $p \geq 1+\frac{2b}{N-1}$, and $p\leq \frac{N+2+2b}{N-2}$ if $N\geq 3$. Then there exists $C(N,p,b)>0$ such that
		\begin{equation}
		\label{GNI}
		\int_{\R^N} |x|^b |f(x)|^{p+1} dx \leq C(N,p,b) \|\nabla f\|^{\frac{N(p-1)-2b}{2}}_{L^2} \|f\|^{\frac{4+2b-(N-2)(p-1)}{2}}_{L^2}, \quad \forall f \in H^1_{\rm rad}(\R^N).
		\end{equation}
	\end{lemma}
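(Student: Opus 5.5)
We prove \eqref{GNI} by combining a pointwise decay estimate for radial functions with an interpolation argument; the exponents are then fixed by a scaling check. The plan is to reduce the weighted integral to an unweighted one by borrowing decay from the radial profile.

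\emph{Step 1: Strauss-type decay.} For $f\in H^1_{\rm rad}(\R^N)$, $N\ge2$, we have
\begin{equation}
|x|^{\frac{N-1}{2}}|f(x)|\le C\|f\|_{L^2}^{1/2}\|\nabla f\|_{L^2}^{1/2}.
\end{equation}
To see this, write $f(r)$ and differentiate $r^{N-1}|f(r)|^2$. Using $|f(r)|^2=-\int_r^\infty \partial_s|f|^2\,ds$ and $s^{N-1}\ge r^{N-1}$ for $s\ge r$, Cauchy--Schwarz gives the bound.

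\emph{Step 2: splitting the weight.} Write
\begin{equation}
|x|^b|f|^{p+1}=\big(|x|^{\frac{N-1}{2}}|f|\big)^{\frac{2b}{N-1}}\,|f|^{p+1-\frac{2b}{N-1}}.
\end{equation}
The remaining exponent $q+1:=p+1-\frac{2b}{N-1}$ is at least $2$ because $p\ge 1+\frac{2b}{N-1}$. For $N\ge3$ it is also at most $\frac{2N}{N-2}$, since $p\le \frac{N+2+2b}{N-2}$ implies $p-\frac{2b}{N-1}\le\frac{N+2}{N-2}$ (using $\frac{2b}{N-2}\ge\frac{2b}{N-1}$). Hence
\begin{equation}
\int|x|^b|f|^{p+1}\le C\big(\|f\|_{L^2}\|\nabla f\|_{L^2}\big)^{\frac{b}{N-1}}\|f\|_{L^{q+1}}^{q+1}.
\end{equation}
We then apply the classical Gagliardo--Nirenberg inequality
\begin{equation}
\|f\|_{L^{q+1}}^{q+1}\le C\|\nabla f\|_{L^2}^{\frac{N(q-1)}{2}}\|f\|_{L^2}^{q+1-\frac{N(q-1)}{2}},
\end{equation}
which is valid for $2\le q+1\le 2^*$ when $N\ge3$ and for all finite $q+1\ge 2$ when $N=2$.

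\emph{Step 3: exponents.} Adding up, the power of $\|\nabla f\|_{L^2}$ is
\begin{equation}
\frac{b}{N-1}+\frac N2\Big(p-1-\frac{2b}{N-1}\Big)=\frac{N(p-1)-2b}{2}+\frac{b}{N-1}\big(1-N+N-1\big)=\frac{N(p-1)-2b}{2}.
\end{equation}
The total homogeneity is $p+1$, so the power of $\|f\|_{L^2}$ is $p+1-\frac{N(p-1)-2b}{2}=\frac{4+2b-(N-2)(p-1)}{2}$, which matches \eqref{GNI}. One can also confirm this with the scalings $f\mapsto \lambda f(\mu\cdot)$.

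\emph{Main obstacle.} The endpoint $N\ge3$, $p=\frac{N+2+2b}{N-2}$ is the only delicate case. There $q+1=2+\frac{4}{N-2}+2b\big(\frac1{N-2}-\frac1{N-1}\big)$, which can exceed $2^*$, so the simple split in Step 2 fails. In that case we would use a split with a different exponent $\theta\in[0,\frac{2b}{N-1}]$ on the radial factor, and choose $\theta$ so that the remaining power lies in $[2,2^*]$. Equivalently, we would use the Caffarelli--Kohn--Nirenberg/Hardy-type radial inequality, which bounds $\int |x|^b|f|^{2^*+\frac{2b}{N-2}}$ by $\|\nabla f\|_{L^2}^{p+1}$ (take $\theta=\frac{2b}{N-2}$ applied to the Strauss inequality $|x|^{\frac{N-2}{2}}|f|\le C\|\nabla f\|_{L^2}$). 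Interpolating the two estimates covers the full range. Since this is the lemma cited from \cite{DMS2021}, we may also simply invoke it.
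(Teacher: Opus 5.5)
Your overall strategy (Strauss decay to absorb the weight, then unweighted Gagliardo--Nirenberg, with the exponents fixed by scaling) is the right one, and for $N=2$ it works as written. For $N\ge3$, however, Step 2 contains a false inequality that hides the real difficulty. You claim that $p\le\frac{N+2+2b}{N-2}$ implies $p-\frac{2b}{N-1}\le\frac{N+2}{N-2}$ ``using $\frac{2b}{N-2}\ge\frac{2b}{N-1}$'', but that inequality points the wrong way. What you actually get is
\begin{equation*}
q+1=p+1-\frac{2b}{N-1}\le 2^*+2b\Big(\frac1{N-2}-\frac1{N-1}\Big),
\end{equation*}
and $q+1\le 2^*$ holds only for $p\le p_0:=\frac{N+2}{N-2}+\frac{2b}{N-1}$. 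So the simple split fails on the whole interval $p_0<p\le p_1:=\frac{N+2+2b}{N-2}$, not just at the endpoint. At $p=p_1$ it fails for every $b>0$, rather than merely ``can exceed''. This is exactly the range that the paper's introduction says was left open by the Chen--Guo theory, whose argument is essentially your Step 2, and which was filled in \cite{DMS2021}; the paper itself only cites the lemma.

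Your repair paragraph is also partly off. Taking $\theta\in[0,\frac{2b}{N-1}]$ on the factor $|x|^{\frac{N-1}{2}}|f|$ makes matters worse: it leaves an uncompensated growing weight $|x|^{b-\theta\frac{N-1}{2}}$ and an even larger leftover power of $|f|$. You need more than $\frac{2b}{N-1}$ powers of $|f|$ to carry the weight. That forces a radial factor with a smaller power of $|x|$, namely the $\dot H^1$ decay $|x|^{\frac{N-2}{2}}|f(x)|\le C\|\nabla f\|_{L^2}$, which is available since $N\ge3$ in this range.

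Your second suggestion does close the gap once it is made precise. Step 2 remains valid at $p=p_0$, where $q+1=2^*$. The endpoint $p=p_1$ follows from the $\dot H^1$ decay with $\theta=\frac{2b}{N-2}$ plus Sobolev. For $p_0<p<p_1$, H\"older in $L^{p+1}(|x|^b\,dx)$ interpolates between these two bounds, and the exponents match because they are affine in $p$. Equivalently, take the geometric mean of the two pointwise bounds,
\begin{equation*}
|x|^{\gamma}|f(x)|\le C\|f\|_{L^2}^{\gamma-\frac{N-2}{2}}\|\nabla f\|_{L^2}^{\frac N2-\gamma},\qquad \frac{N-2}{2}\le\gamma\le\frac{N-1}{2},
\end{equation*}
and split with $\theta=p+1-2^*\in\big(\frac{2b}{N-1},\frac{2b}{N-2}\big]$ and $\gamma=b/\theta$. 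The leftover power is then exactly $2^*$, and the resulting exponents are those of \eqref{GNI}. With this case distinction written out, the proof is complete.
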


	\begin{lemma}\cite[Lemma 2.7.]{DMS2021} \label{lem-comp-embe}
	Let $N\geq 2$, $b>0$, $p>1+\frac{2b}{N-1}$, and $p<\frac{N+2+2b}{N-2}$ if $N\geq 3$. Then, the embedding
	\begin{equation}
	\label{embed}
	H^1_{\rm rad}(\R^N) \hookrightarrow L^{p+1}_b(\R^N):=L^{p+1}(\R^N, |x|^b\,dx)
	\end{equation}
	is compact. 
	\end{lemma}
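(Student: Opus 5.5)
The statement is the compactness of the embedding \eqref{embed}. I will split $\R^N$ into a large ball and its complement: on the ball the weight is bounded and Rellich applies, and on the exterior the radial decay of $H^1_{\rm rad}$ functions gives uniform smallness.

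Let $(f_n)$ be bounded in $H^1_{\rm rad}(\R^N)$. After extracting a subsequence, $f_n\rightharpoonup f$ weakly in $H^1_{\rm rad}$. It suffices to show $\int|x|^b|f_n-f|^{p+1}\,dx\to0$. Set $g_n=f_n-f$; this is bounded in $H^1_{\rm rad}$ and tends weakly to $0$.

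\emph{Exterior region.} Use Strauss' radial inequality $|g(x)|\le C|x|^{-\frac{N-1}{2}}\|g\|_{L^2}^{1/2}\|\nabla g\|_{L^2}^{1/2}$ for $|x|\ge1$. For $R\ge1$ this gives
$$
\int_{|x|>R}|x|^b|g_n|^{p+1}\,dx\le \sup_{|x|>R}\big(|x|^b|g_n(x)|^{p-1}\big)\int|g_n|^2\,dx\le C R^{\,b-\frac{(N-1)(p-1)}{2}}.
$$
The exponent is negative exactly because $p>1+\frac{2b}{N-1}$. Hence the exterior contribution is small uniformly in $n$ once $R$ is large.

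\emph{Interior region.} On $B_R$ the weight satisfies $|x|^b\le R^b$. So it suffices that $g_n\to0$ in $L^{p+1}(B_R)$, and Rellich--Kondrachov gives this whenever $p+1<2^*$, i.e. $p<\frac{N+2}{N-2}$. The subtle range is $\frac{N+2}{N-2}\le p<\frac{N+2+2b}{N-2}$, where the unweighted exponent exceeds $2^*$ and this step is the main obstacle. I would further split $B_R$ into $B_\varepsilon$ and the annulus $\varepsilon<|x|<R$.

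On the annulus, radial $H^1$ functions are one-dimensional $H^1$ functions in $r$, hence bounded and compact into $L^\infty$. So $g_n\to0$ uniformly there and that piece tends to $0$.

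Near the origin, I use $p\le\frac{N+2+2b}{N-2}$, i.e. $p+1\le \frac{2(N+b)}{N-2}$. The idea is that $|x|^b$ vanishes at the origin and compensates the supercritical power. One route is to interpolate the Strauss bound $|g|\le C|x|^{-\frac{N-2}{2}}\|\nabla g\|_{L^2}$, valid for radial $\dot H^1$ functions, with the Sobolev bound $\|g\|_{L^{2^*}}\le C\|\nabla g\|_{L^2}$:
$$
\int_{B_\varepsilon}|x|^b|g_n|^{p+1}\,dx\le \sup_{B_\varepsilon}\big(|x|^{b}|g_n|^{p+1-2^*}\big)\int|g_n|^{2^*}\,dx.
$$
Since $(p+1-2^*)\frac{N-2}{2}\le b$ under the hypothesis, this piece is bounded by $C\varepsilon^{\,b-(p+1-2^*)\frac{N-2}{2}}$. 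The exponent is positive by the strict inequality $p<\frac{N+2+2b}{N-2}$, so the piece is small for small $\varepsilon$. For $N=2$ the same splitting works, using the logarithmic radial bound near the origin, and there is no upper restriction on $p$.

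Combining the three regions and letting $n\to\infty$, then $\varepsilon\to0$ and $R\to\infty$, yields strong convergence in $L^{p+1}_b$. This proves Lemma~\ref{lem-comp-embe}. Boundedness of the embedding also follows from the same estimates, consistent with Lemma~\ref{lem-GN-ineq}.
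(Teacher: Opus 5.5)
Your proof is correct. The paper gives no proof of this lemma to compare with: it quotes the result from \cite{DMS2021} (Lemma 2.7 there). Your three-region argument is therefore a complete, self-contained proof of a statement the paper only imports.

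Each hypothesis is used exactly once, and every bound is uniform in $n$, so the $\limsup_n$ followed by $\varepsilon\to0$, $R\to\infty$ closes the argument.
\begin{itemize}
\item Exterior region: the Strauss decay gives the factor $R^{\frac{2b-(N-1)(p-1)}{2}}$. This is small precisely because $p>1+\frac{2b}{N-1}$.
\item Annulus: the restrictions of $g_n$ to $(\varepsilon,R)$ are weakly null in $H^1(\varepsilon,R)$. The compact embedding into $C([\varepsilon,R])$ then gives uniform convergence to $0$.
\item Near the origin: the radial bound $|g|\le C|x|^{-\frac{N-2}{2}}\|\nabla g\|_{L^2}$ together with Sobolev gives $C\varepsilon^{\frac{N+2+2b-(N-2)p}{2}}$. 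This is small precisely because $p<\frac{N+2+2b}{N-2}$.
\end{itemize}

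A useful by-product is that your two exponents vanish exactly at the endpoints $p=1+\frac{2b}{N-1}$ and $p=\frac{N+2+2b}{N-2}$. This is consistent with the paper's remark that compactness fails there: mass can escape to infinity at the lower endpoint, and can concentrate at the origin at the upper one.

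Two minor simplifications are possible.
\begin{itemize}
\item For $N=2$ you do not need the logarithmic radial bound. Rellich--Kondrachov already gives a compact embedding $H^1(B_R)\hookrightarrow L^q(B_R)$ for every $q<\infty$, and $|x|^b\le R^b$ on $B_R$.
\item The finiteness of $\int_{\R^N}|x|^b|f|^{p+1}\,dx$ for the weak limit $f$ follows from the same estimates, or directly from Lemma~\ref{lem-GN-ineq}. It is cleaner to state this at the start rather than at the end.
\end{itemize}
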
	
	\begin{remark}
	\label{b=0}
	\rm 
    ~\begin{enumerate}[label=(\roman*)]
\item The case $b=0$ in Lemma \ref{lem-comp-embe} is standard and can be found in several references; see, for instance, \cite{BL, Kav, Strauss}.
\item Lemma~\ref{lem-comp-embe} is the compactness substitute for the missing translation invariance.
In the case $b>0$, the weight $|x|^b$ strongly penalizes mass at infinity,
and radial symmetry allows one to exploit this to recover compactness.
\end{enumerate}
	\end{remark}
	We also recall the following Pohozaev's identities \cite{CG-DCDS-B}.
	\begin{lemma}
	\label{Pohoz}
	Let $N\geq 2$, $b>0$, $p>1+\frac{2b}{N-1}$, and $p<\frac{N+2+2b}{N-2}$ if $N\geq 3$. Let $\phi\in H^1_{\rm rad}(\R^N)$ be a non-trivial solution to \eqref{GS}. Then
	\begin{equation}
	  \label{Pohoz1}
	  \left(\frac{2}{N}-1\right)\int_{\R^N}\,|\nabla\phi(x)|^2\,dx=\omega\int_{\R^N}\,|\phi(x)|^2\,dx
	  -\frac{2(N+b)}{N(p+1)}\,\int_{\R^N}\,|x|^b|\phi(x)|^{p+1}\,dx.
	\end{equation}
	\begin{equation}
	  \label{Pohoz2}
	  \int_{\R^N}\,|\nabla\phi(x)|^2\,dx+\omega\int_{\R^N}\,|\phi(x)|^2\,dx=
	  \int_{\R^N}\,|x|^b|\phi(x)|^{p+1}\,dx.
	\end{equation}
	\end{lemma}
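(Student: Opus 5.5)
Both identities will be obtained by testing \eqref{GS} against suitable multipliers. Identity \eqref{Pohoz2} comes from the multiplier $\bar\phi$, and identity \eqref{Pohoz1} from the dilation multiplier $x\cdot\nabla\bar\phi$.

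\textbf{Step 1 (Nehari identity \eqref{Pohoz2}).} Since $\phi\in H^1_{\rm rad}$ solves \eqref{GS} in the weak sense, we may test against $\bar\phi\in H^1$ and take real parts. This gives $\|\nabla\phi\|_{L^2}^2+\omega\|\phi\|_{L^2}^2=\int|x|^b|\phi|^{p+1}\,dx$. The right-hand side is finite by Lemma~\ref{lem-GN-ineq}, so the pairing is legitimate. This step is immediate.

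\textbf{Step 2 (Pohozaev identity).} We multiply \eqref{GS} by $x\cdot\nabla\bar\phi$, take real parts, and integrate, formally at first. The three terms give
\begin{itemize}
\item[] $\mathrm{Re}\int(-\Delta\phi)\,x\cdot\nabla\bar\phi\,dx=\frac{N-2}{2}\|\nabla\phi\|_{L^2}^2$,
\item[] $\omega\,\mathrm{Re}\int\phi\,x\cdot\nabla\bar\phi\,dx=-\frac{N\omega}{2}\|\phi\|_{L^2}^2$,
\item[] $\mathrm{Re}\int|x|^b|\phi|^{p-1}\phi\,x\cdot\nabla\bar\phi\,dx=\frac1{p+1}\int|x|^b\,x\cdot\nabla(|\phi|^{p+1})\,dx=-\frac{N+b}{p+1}\int|x|^b|\phi|^{p+1}\,dx$.
\end{itemize}
In the last term we used $\operatorname{div}(|x|^bx)=(N+b)|x|^b$. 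Setting the first two terms equal to the third and multiplying by $-2/N$ yields exactly \eqref{Pohoz1}.

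\textbf{Step 3 (justification).} This is the main obstacle. The multiplier $x\cdot\nabla\phi$ need not lie in $L^2$, and the weight $|x|^b$ grows at infinity, so boundary terms must be controlled. We first establish regularity. Elliptic regularity away from the origin gives $\phi\in H^2_{\rm loc}(\R^N\setminus\{0\})$. Writing the equation in the ODE form $\phi''+\frac{N-1}{r}\phi'=\omega\phi-r^b|\phi|^{p-1}\phi$ gives $\phi\in C^2$ for $r>0$.

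For decay, we use the Strauss radial estimate $|\phi(r)|\lesssim r^{-(N-1)/2}\|\phi\|_{H^1}$. The condition $p>1+\frac{2b}{N-1}$ makes $r^b|\phi|^{p-1}\to0$ as $r\to\infty$. Hence $\omega-r^b|\phi|^{p-1}\ge\omega/2$ for large $r$, and a comparison argument gives exponential decay of $\phi$ and $\phi'$.

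With this in hand, we insert a cutoff $\chi_R(x)\,x\cdot\nabla\bar\phi$ with $\chi_R=\chi(\cdot/R)$ and also excise a small ball $B_\varepsilon$ around the origin. Boundary contributions at $|x|=R$ vanish as $R\to\infty$ by the exponential decay. Contributions at $|x|=\varepsilon$ are of size $\varepsilon^{N}\big(|\phi'|^2+|\phi|^2+\varepsilon^b|\phi|^{p+1}\big)$. These vanish along a suitable sequence $\varepsilon_k\to0$, because $\int_0^1 r^{N-1}(|\phi'|^2+|\phi|^2+r^b|\phi|^{p+1})\,dr<\infty$. Passing to the limits gives \eqref{Pohoz1}.

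As an alternative to the cutoff, one could derive \eqref{Pohoz1} variationally. Since $\lambda\mapsto S_\omega(\lambda^{N/2}\phi(\lambda\cdot))$ is differentiable and $\phi$ is a critical point of $S_\omega$ on $H^1_{\rm rad}$, its derivative at $\lambda=1$ vanishes. This requires only the $C^1$ regularity of $S_\omega$ together with continuity of $\lambda\mapsto\phi_\lambda$ in $H^1$, and its derivative. That derivative exists for $\phi\in H^1$ only weakly, so I would use the cutoff argument as the primary route.
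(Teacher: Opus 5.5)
The paper gives no proof of this lemma; it only quotes the two identities from Chen and Guo. Your route is the standard argument: test \eqref{GS} against $\bar\phi$, then against the dilation multiplier $x\cdot\nabla\bar\phi$, and justify the latter on annuli $\{\varepsilon<|x|<R\}$ using that radial solutions are $C^2$ on $(0,\infty)$. Your Step~3 is sound.

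The exponential-decay step is in fact unnecessary. The same $\liminf$ device you use at the origin also works at infinity. Since $\int_1^\infty r^{N-1}\big(|\phi'|^2+|\phi|^2+r^b|\phi|^{p+1}\big)\,dr<\infty$, the quantity $R^N\big(|\phi'(R)|^2+|\phi(R)|^2+R^b|\phi(R)|^{p+1}\big)$ tends to $0$ along some sequence $R_k\to\infty$. The bulk integrals converge because $|\nabla\phi|^2$, $|\phi|^2$ and $|x|^b|\phi|^{p+1}$ all lie in $L^1(\R^N)$.

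There is, however, a sign error in the first bullet of Step~2, and as written your final algebra does not produce \eqref{Pohoz1}. Integrating by parts gives
\[
\mathrm{Re}\int_{\R^N}(-\Delta\phi)\,x\cdot\nabla\bar\phi\,dx
=\int_{\R^N}|\nabla\phi|^2\,dx+\frac12\int_{\R^N}x\cdot\nabla\big(|\nabla\phi|^2\big)\,dx
=-\frac{N-2}{2}\,\|\nabla\phi\|_{L^2}^2 .
\]
This is the familiar sign behind the classical identity $\frac{N-2}{2}\|\nabla u\|_{L^2}^2=N\int F(u)$.

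With your $+\frac{N-2}{2}$, multiplying by $-2/N$ yields
\[
\left(\frac2N-1\right)\|\nabla\phi\|_{L^2}^2=\frac{2(N+b)}{N(p+1)}\int_{\R^N}|x|^b|\phi|^{p+1}\,dx-\omega\|\phi\|_{L^2}^2 .
\]
This is \eqref{Pohoz1} with its right-hand side negated, which is a different and false identity whenever $N\geq 3$. For $N=2$ the gradient term vanishes and the slip is harmless.

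With the corrected sign you get
\[
-\frac{N-2}{2}\|\nabla\phi\|_{L^2}^2-\frac{N\omega}{2}\|\phi\|_{L^2}^2=-\frac{N+b}{p+1}\int_{\R^N}|x|^b|\phi|^{p+1}\,dx ,
\]
and multiplying by $-2/N$ gives exactly \eqref{Pohoz1}. Your other two bullets are correct as stated.
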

	\begin{remark}\label{rm1}
	\label{Pohoz3}
	\rm 
~\begin{enumerate}[label=(\roman*)]
\item Combining \eqref{Pohoz1} and \eqref{Pohoz2}, we obtain
\begin{equation}
\label{Pohoz4}
\int_{\R^N} |\nabla \phi(x)|^2 \, dx
=
\frac{N(p-1)-2b}{2(p+1)}
\int_{\R^N} |x|^b |\phi(x)|^{p+1} \, dx,
\end{equation}
and
\begin{equation}
\label{Pohoz5}
\bigl(N+2+2b-(N-2)p\bigr)
\int_{\R^N} |\nabla \phi(x)|^2 \, dx
=
\omega \bigl(N(p-1)-2b\bigr)
\int_{\R^N} |\phi(x)|^2 \, dx.
\end{equation}
In particular, there exists no nontrivial solution to \eqref{GS} when
$$
1+\frac{2b}{N-1} < p < \frac{N+2+2b}{N-2}
\quad \text{and} \quad
\omega \leq 0.
$$

\item The Pohozaev identities link the kinetic, mass, and potential terms for stationary
solutions and yield the constraint $P(\phi)=0$ for every solution of \eqref{GS}.
This is why the functional $P$ is the correct virial quantity in the dynamical analysis.
\end{enumerate}
	\end{remark}

	\section{Proof of Theorem~\ref{G}}
	\label{S3}
	\setcounter{equation}{0}
	In all this section, we suppose that $N\geq 2, b>0, \omega>0$, and $1+\frac{2b}{N-1}<p<p^c$ where $p^c$ is given by~\eqref{pcc}.
	\begin{lemma}
	\label{d}
	We have $d(\omega)>0$.
	\end{lemma}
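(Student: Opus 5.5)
On the Nehari manifold the action reduces to a positive multiple of the potential term, and a lower bound for that term follows from the weighted Gagliardo--Nirenberg inequality (Lemma~\ref{lem-GN-ineq}).

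\emph{Reduction of $S_\omega$ on $\mathcal N_\omega$.} For $\phi\in\mathcal N_\omega$ we have $I_\omega(\phi)=0$, so
$$
S_\omega(\phi)=S_\omega(\phi)-\frac{1}{p+1}I_\omega(\phi)=\Big(\frac12-\frac1{p+1}\Big)\Big(\|\nabla\phi\|_{L^2}^2+\omega\|\phi\|_{L^2}^2\Big).
$$
Since $\omega>0$, the bracket is equivalent to $\|\phi\|_{H^1}^2$, namely $\|\nabla\phi\|_{L^2}^2+\omega\|\phi\|_{L^2}^2\ge \min(1,\omega)\|\phi\|_{H^1}^2$. It therefore suffices to bound $\|\phi\|_{H^1}$ from below uniformly on $\mathcal N_\omega$.

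\emph{Lower bound on $\mathcal N_\omega$.} Apply \eqref{GNI}, which holds in the present range $1+\frac{2b}{N-1}<p<p^c$ (for $N\ge3$ one has $p^c=\frac{N+2+2b}{N-2}$). The exponents in \eqref{GNI} are $\alpha=\frac{N(p-1)-2b}{2}$ and $\beta=\frac{4+2b-(N-2)(p-1)}{2}$. We check three facts:
\begin{itemize}
\item $\alpha>0$, because $p>1+\frac{2b}{N-1}>1+\frac{2b}{N}$;
\item $\beta\ge0$, because $p\le p^c$ (and automatically when $N=2$);
\item $\alpha+\beta=p+1$.
\end{itemize}
Using $\|\nabla\phi\|_{L^2},\|\phi\|_{L^2}\le\|\phi\|_{H^1}$, this gives
$$
\int|x|^b|\phi|^{p+1}\,dx\le C\|\phi\|_{H^1}^{p+1}.
$$
Combined with $I_\omega(\phi)=0$,
$$
\min(1,\omega)\|\phi\|_{H^1}^2\le\|\nabla\phi\|_{L^2}^2+\omega\|\phi\|_{L^2}^2=\int|x|^b|\phi|^{p+1}\,dx\le C\|\phi\|_{H^1}^{p+1}.
$$
Because $\phi\ne0$ and $p>1$, we conclude $\|\phi\|_{H^1}^{p-1}\ge \min(1,\omega)/C$.

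\emph{Conclusion.} Inserting this bound into the reduction gives
$$
S_\omega(\phi)\ge \frac{p-1}{2(p+1)}\min(1,\omega)\big(\min(1,\omega)/C\big)^{2/(p-1)}>0
$$
uniformly on $\mathcal N_\omega$, hence $d(\omega)>0$.

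The only delicate point is making sure \eqref{GNI} is valid with nonnegative exponents throughout the stated range, which the checks above confirm. One should also note that $\mathcal N_\omega\neq\emptyset$. Indeed, for any $\phi\ne0$ in $H^1_{\rm rad}$ we have $\int|x|^b|\phi|^{p+1}\,dx>0$, and this integral is finite by Lemma~\ref{lem-comp-embe}. Scaling $t\phi$ with
$$
t^{p-1}=\frac{\|\nabla\phi\|_{L^2}^2+\omega\|\phi\|_{L^2}^2}{\int|x|^b|\phi|^{p+1}\,dx}
$$
then lands on $\mathcal N_\omega$, so $d(\omega)$ is finite.
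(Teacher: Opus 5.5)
Your proof is correct and follows essentially the same route as the paper. On $\mathcal N_\omega$ you combine the Nehari identity $I_\omega(\phi)=0$ with the weighted Gagliardo--Nirenberg inequality \eqref{GNI} to get a uniform lower bound; the only cosmetic difference is that you write $S_\omega(\phi)=\frac{p-1}{2(p+1)}\big(\|\nabla\phi\|_{L^2}^2+\omega\|\phi\|_{L^2}^2\big)$ and bound $\|\phi\|_{H^1}$ from below, whereas the paper writes $S_\omega(\phi)=\frac{p-1}{2(p+1)}\int|x|^b|\phi|^{p+1}\,dx$ and bounds the potential term from below, and these two quantities coincide on $\mathcal N_\omega$ (your explicit exponent checks and the remark that $\mathcal N_\omega\neq\emptyset$ are details the paper leaves implicit).
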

	\begin{proof}
Let $\phi\in \mathcal{N}_{\omega}$. By definition of the Nehari manifold, we have
\[
\int_{\R^N}|\nabla\phi|^2\,dx+\omega\int_{\R^N}|\phi|^2\,dx
=\int_{\R^N}|x|^b|\phi|^{p+1}\,dx.
\]
In particular, this implies
\begin{equation}
\label{d1}
\|\phi\|_{H^1}^2\lesssim \int_{\R^N}|x|^b|\phi|^{p+1}\,dx.
\end{equation}

On the other hand, by the Gagliardo--Nirenberg inequality (cf. Lemma~\ref{lem-GN-ineq}),
we obtain
\begin{equation}
\label{d2}
\int_{\R^N}|x|^b|\phi|^{p+1}\,dx\lesssim \|\phi\|_{H^1}^{p+1}.
\end{equation}
Combining \eqref{d1} and \eqref{d2}, we infer that
\begin{equation}
\label{d3}
\int_{\R^N}|x|^b|\phi|^{p+1}\,dx\gtrsim 1.
\end{equation}

Since $I_{\omega}(\phi)=0$, it follows that
\[
S_{\omega}(\phi)
=\frac{p-1}{2(p+1)}\int_{\R^N}|x|^b|\phi|^{p+1}\,dx.
\]
Together with \eqref{d3}, this yields $d(\omega)>0$, as claimed.
\end{proof}
We now turn to the proof of Theorem~\ref{G}.

\begin{itemize}
    \item \underline{\bf Proof of Part (i)}\\
    Let $(\phi_n)$ be a minimizing sequence for $d(\omega)$. Then
    \begin{equation}
    \label{Sd}
    S_{\omega}(\phi_n)=\frac{p-1}{2(p+1)}\int_{\R^N}\,|x|^b|\phi_n|^{p+1}\to d(\omega).
\end{equation}
    Hence $\left(\int_{\R^N}\,|x|^b|\phi_n|^{p+1}\right)$ is bounded. Since $I_{\omega}(\phi_n)=0$, we deduce that the sequence $(\phi_n)$
is bounded in $H^1$. It follows that, up to a sub-sequence, we have $\phi_n\rightharpoonup \phi$ weakly in $H^1$. Taking advantage of Lemma~\ref{lem-comp-embe}, we infer
\begin{equation}
\label{phi1}
\lim_{n\to\infty}\int_{\R^N}\,|x|^b|\phi_n|^{p+1}=\int_{\R^N}\,|x|^b|\phi|^{p+1}.
\end{equation}
Recalling \eqref{Sd} we end up with
\begin{equation}
\label{phi2}
\int_{\R^N}\,|x|^b|\phi|^{p+1}=\frac{2(p+1)}{p-1}\,d(\omega)>0.
\end{equation}
In particular, $\phi\neq 0$. To conclude the proof of Part (i), it remains to show that $I_{\omega}(\phi)=0$. It follows from \eqref{phi1} that 
$$
I_{\omega}(\phi)\leq \liminf\,I_{\omega}(\phi_n)=0.
$$
Suppose that $I_{\omega}(\phi)<0$. Hence
$$
\int |\nabla\phi|^2+\omega\int_{\R^N}\,|\phi|^2<\int_{\R^N}\,|x|^b|\phi|^{p+1}.
$$
Set
$$
\lambda=\left(\frac{\|\nabla\phi\|_{L^2}^2+\omega\|\phi\|_{L^2}^2}{\int_{\R^N}\,|x|^b|\phi|^{p+1}}\right)^{\frac{1}{p-1}}\in(0,1).
$$
Clearly $I_{\omega}(\lambda\phi)=0$. Therefore $$
d(\omega)\leq S_{\omega}(\lambda\phi)=\lambda^{p+1} d(\omega)<d(\omega).
$$
This is a contradiction and we conclude that $I_{\omega}(\phi)=0$. Finally 
$$
S_{\omega}(\phi)=\frac{p-1}{2(p+1)}\int_{\R^N}\,|x|^b|\phi|^{p+1}=d(\omega),
$$
and $\phi\in \mathcal{M}_{\omega}.$

\item \underline{\bf Proof of Part (ii)}\\
First let us show that $\mathcal{M}_{\omega}\subset \mathcal{G}_{\omega}.$ Let $\phi\in \mathcal{M}_{\omega}$. There exists a Lagrange multiplier $\lambda$ such that $S'_{\omega}(\phi)=\lambda\, I'_{\omega}(\phi)$. Hence 
$$
0=I_{\omega}(\phi)=\langle S'_{\omega}(\phi), \phi\rangle=\lambda\langle I'_{\omega}(\phi), \phi\rangle=\lambda (1-p)\int_{\R^N}\,|x|^b |\phi|^{p+1}.
$$
Therefore $\lambda=0$ and $\phi\in \mathcal{A}_{\omega}.$ Next, let $v\in \mathcal{A}_{\omega}.$  Then $0=\langle S'_{\omega}(v), v\rangle=I_{\omega}(v).$
It follows that $d(\omega)=S_{\omega}(\phi)\leq S_{\omega}(v)$ and $\phi\in \mathcal{G}_{\omega}$.\\
Now let us prove that $\mathcal{G}_{\omega}\subset \mathcal{M}_{\omega}.$\footnote{We are grateful to {\em Alex H. Ardila} for bringing our attention to this argument in the proof.} Let $\phi\in \mathcal{G}_{\omega}$. Since (see \eqref{GSS}) $\phi \in \mathcal{A}_{\omega}$, we have
$$
I_{\omega}(\phi)=\langle S'_{\omega}(\phi), \phi\rangle=0.
$$
In particular, by \eqref{domega}, $d(\omega)\leq S_{\omega}(\phi).$
Moreover, since $\mathcal{M}_{\omega}\neq\emptyset$, there exists $\chi\in\mathcal{M}_{\omega}$. In particular, $S_{\omega}(\chi)=d(\omega).$ Now as $\mathcal{M}_{\omega}\subset \mathcal{G}_{\omega}$, then $\chi\in \mathcal{G}_{\omega}$. In particular, $\chi \in \mathcal{A}_{\omega}$. Thus, by using the fact that $\phi\in \mathcal{G}_{\omega}$ and \eqref{GSS}, we obtain that
$$
S_{\omega}(\phi)\leq S_{\omega}(\chi)=d(\omega).
$$
Therefore $S_{\omega}(\phi)=d(\omega).$ 
Since $I_{\omega}(\phi)=0$, we see that $\phi\in\mathcal{M}_{\omega} $ Thus, $\mathcal{G}_{\omega}\subset \mathcal{M}_{\omega}$. This concludes the proof of Part (ii).
    \item \underline{\bf Proof of Part (iii)}\\
    Let $\phi\in \mathcal{M}_{\omega}$. We claim that $|\phi|\in \mathcal{M}_{\omega}$.
Indeed, since
\[
\|\nabla|\phi|\|_{L^2}^2\leq \|\nabla\phi\|_{L^2}^2,
\]
it follows that
\begin{eqnarray*}
S_{\omega}(|\phi|)&\leq& S_{\omega}(\phi),\\
I_{\omega}(|\phi|)&\leq& I_{\omega}(\phi)=0.
\end{eqnarray*}
In particular, $I_{\omega}(|\phi|)\leq 0$. Arguing as in the proof of Part~(i),
we infer that $I_{\omega}(|\phi|)=0$, hence $|\phi|\in \mathcal{N}_{\omega}$.

Next, we prove that $S_{\omega}(|\phi|)=d(\omega)$. Clearly,
\[
S_{\omega}(|\phi|)\leq S_{\omega}(\phi)=d(\omega).
\]
On the other hand, since $|\phi|\in \mathcal{N}_{\omega}$, we obtain from
\eqref{domega} that
\[
d(\omega)\leq S_{\omega}(|\phi|).
\]
Therefore $S_{\omega}(|\phi|)=d(\omega)$, and the claim is proved. In particular,
we have
\begin{equation}
\label{nabphi}
\|\nabla|\phi|\|_{L^2}^2 = \|\nabla\phi\|_{L^2}^2.
\end{equation}

Since $\phi$ solves \eqref{GS}, elliptic regularity yields
$\phi\in C^{1}(\R^{N},\C)$. Moreover, since $S_{\omega}(|\phi|)=d(\omega)>0$,
we deduce that $|\phi|>0$ in $\R^N$. Define
\[
\psi(x):=\frac{\phi(x)}{|\phi(x)|}.
\]
Then $|\psi|=1$, so $\psi\bar\psi=1$ and hence
\[
0=\nabla(|\psi|^2)=\nabla(\psi\bar\psi)=2\Re(\bar\psi\nabla\psi),
\]
which implies $\Re(\bar\psi\nabla\psi)=0$. Writing $\phi=|\phi|\psi$, we compute
\[
\nabla\phi=\psi\nabla|\phi|+|\phi|\nabla\psi,
\]
and consequently
\[
|\nabla\phi|^2
=|\nabla|\phi||^2+|\phi|^2|\nabla\psi|^2
+2|\phi|\Re\big(\nabla|\phi|\cdot \bar\psi\nabla\psi\big).
\]
Since
\[
\Re\big(\nabla|\phi|\cdot \bar\psi\nabla\psi\big)
=\nabla|\phi|\cdot \Re(\bar\psi\nabla\psi)=0,
\]
we obtain
\[
|\nabla\phi|^2=|\nabla|\phi||^2+|\phi|^2|\nabla\psi|^2.
\]
Integrating over $\R^N$ and using \eqref{nabphi}, we infer that
\[
\int_{\R^N}|\phi|^2|\nabla\psi|^2\,dx
=\int_{\R^N}\big(|\nabla\phi|^2-|\nabla|\phi||^2\big)\,dx
=0.
\]
Since $|\phi|>0$, it follows that $|\nabla\psi|=0$ a.e. in $\R^N$, and by
continuity $\psi$ must be constant. Hence there exists $\theta\in\R$ such that
$\psi(x)=e^{i\theta}$ for all $x\in\R^N$, and therefore
\[
\phi=e^{i\theta}|\phi|.
\]

Finally, we observe that $|\phi|$ is a positive solution of \eqref{GS}.
Indeed, since $\phi$ satisfies \eqref{GS}, the identity $\phi=e^{i\theta}|\phi|$
implies that $|\phi|$ satisfies
\[
-\Delta|\phi|+\omega|\phi|=|x|^b|\phi|^p.
\]
Moreover, $|\phi|\in \mathcal{M}_{\omega}\cap H^1_{\rm rad}$.
By Proposition~\ref{prop-unique}, we conclude that $|\phi|=Q_\omega$, the unique
positive radial solution of \eqref{GS}. Therefore,
\[
\phi=e^{i\theta}Q_\omega.
\]
This completes the proof of Part~(iii), since $\mathcal{G}_{\omega}=\mathcal{M}_{\omega}$.

\end{itemize}
\section{Proof of Theorem \ref{Kpm}}
\label{S4}
	\setcounter{equation}{0}

For $a,c\in\R$, $\lambda>0$, and $v\in H^1_{\bf r}$, we consider the scaling
\[
v^{\lambda}_{a,c}(x):=\lambda^a\,v\!\left(\frac{x}{\lambda^c}\right).
\]
We then define the associated scaling derivative of the action functional by
\begin{eqnarray*}
K_{a,c}(v)
&:=&\partial_{\lambda}\Big(S_\omega\big(v^{\lambda}_{a,c}\big)\Big)\Big|_{\lambda=1}\\
&=&\frac12(2a+Nc)\|v\|^2+\frac12\big(2a+(N-2)c\big)\|\nabla v\|^2
-\Big(a+\frac{c(b+N)}{1+p}\Big)\int_{\R^N}|x|^b|v|^{1+p}\,dx.
\end{eqnarray*}
We further introduce the constant $B=B(N,b,p)$ defined by
\begin{equation}
\label{B}
B:=\frac{N(p-1)-2b}{2}.
\end{equation}

\noindent $\bullet$ {\bf Stability of $ \mathcal{K}_\omega^\pm$ under the flow of \eqref{INLS}.}\\
Let us show that $\mathcal{K}_\omega^+$ is invariant under the flow of \eqref{INLS}.
Let $u_0\in \mathcal{K}_\omega^+$, that is,
\[
S_\omega(u_0)<d(\omega)
\qquad\text{and}\qquad
P(u_0)\ge 0.
\]
Assume by contradiction that there exists $t_0>0$ such that
$u(t_0)\notin \mathcal{K}_\omega^+$. By the conservation laws and a standard
continuity argument, there then exists a time $t_1\in(0,T^*)$ such that
\[
S_\omega(u(t_1))<d(\omega)
\qquad\text{and}\qquad
P(u(t_1))=0.
\]

Now, arguing as in \cite{imn}, for any pair $(a,c)$ satisfying
\[
\min\Big\{2a+Nc,\;2a+(N-2)c\Big\}\ge 0
\qquad\text{and}\qquad
\max\Big\{2a+Nc,\;2a+(N-2)c\Big\}>0,
\]
one has the variational characterization
\[
d(\omega)=\inf\Big\{S_\omega(v):\ K_{a,c}(v)=0\Big\}.
\]
In particular, since $K_{1,-\frac2N}=\frac2N P$, we obtain
\[
d(\omega)=\inf\Big\{S_\omega(v):\ P(v)=0\Big\}.
\]
Applying this with $v=u(t_1)$ yields
\[
d(\omega)\le S_\omega(u(t_1))=S_\omega(u_0)<d(\omega),
\]
which is a contradiction. This proves that $\mathcal{K}_\omega^+$ is invariant
under the flow of \eqref{INLS}. The invariance of $\mathcal{K}_\omega^-$ is
proved in the same way.\\
\noindent$\bullet$ {\bf Global existence for data in $\mathcal{K}_\omega^+$.}\\
Let $u_0\in \mathcal{K}_\omega^+$. Then, by the invariance of $\mathcal{K}_\omega^+$
under the flow, we have
\[
P(u(t))\ge 0 \qquad \text{for all } t\in[0,T^*).
\]
Recalling the definition of $P$, this yields
\[
\|\nabla u(t)\|_{L^2}^2
\ge \frac{B}{1+p}\int_{\R^N}|x|^b|u(t)|^{1+p}\,dx.
\]
Moreover, using the identity
\[
E(u(t))=\frac12\|\nabla u(t)\|_{L^2}^2-\frac{1}{1+p}\int_{\R^N}|x|^b|u(t)|^{1+p}\,dx,
\]
we infer that
\[
\frac{1}{1+p}\int_{\R^N}|x|^b|u(t)|^{1+p}\,dx
=\frac12\|\nabla u(t)\|_{L^2}^2-E(u(t)).
\]
Consequently,
\[
\|\nabla u(t)\|_{L^2}^2
\ge B\Big(\frac12\|\nabla u(t)\|_{L^2}^2-E(u(t))\Big),
\]
and therefore
\[
\Big(1-\frac{B}{2}\Big)\|\nabla u(t)\|_{L^2}^2\ge -B\,E(u(t)).
\]
Since $E(u(t))=E(u_0)$ is conserved, we obtain the uniform bound
\[
\|\nabla u(t)\|_{L^2}^2\le \frac{2B}{B-2}\,E(u_0).
\]
In particular, $\|u(t)\|_{H^1}$ remains bounded on $[0,T^*)$, and thus the
maximal existence time satisfies $T^*=\infty$.
\\
\noindent$\bullet$ {\bf Non-global existence for data in $\mathcal{K}_\omega^-$.}\\
Let us start with the following auxiliary characterization of $d(\omega)$.
\begin{lemma}
One has
\[
d(\omega)=\inf_{0\neq u\in H^1}\Big\{\big(S_\omega-\tfrac12P\big)(u):\ P(u)\leq0\Big\}.
\]
\end{lemma}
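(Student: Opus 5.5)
The plan is to compare the right-hand side, call it $m(\omega)$, with the characterization $d(\omega)=\inf\{S_\omega(v):\ P(v)=0\}$ established above (via $K_{1,-\frac2N}=\frac2N P$). I will work throughout in the radial class $H^1_{\rm rad}$, where that characterization holds and where the minimization defining $d(\omega)$ takes place. I will also work in the regime $p_c<p<p^c$ in which the lemma is used (blow-up in $\mathcal K^-_\omega$), so that $B>2$ with $B$ as in \eqref{B}. First I record the explicit form of the functional being minimized:
\[
\Big(S_\omega-\tfrac12P\Big)(u)=\frac{\omega}{2}\|u\|_{L^2}^2+\frac{B-2}{2(p+1)}\int_{\R^N}|x|^b|u|^{p+1}\,dx ,
\]
which is a nonnegative combination of the mass and the weighted potential term since $B>2$.

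\emph{Inequality $m(\omega)\le d(\omega)$.} If $P(v)=0$ and $v\neq0$, then $S_\omega(v)=(S_\omega-\frac12P)(v)$ and $v$ is admissible for $m(\omega)$. Taking the infimum over $\{P=0\}$ and using the characterization of $d(\omega)$ gives $m(\omega)\le d(\omega)$.

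\emph{Inequality $d(\omega)\le m(\omega)$.} Let $u\neq0$ with $P(u)\le0$; if $P(u)=0$ there is nothing to prove, so assume $P(u)<0$. I use the mass-preserving dilation $u^\lambda(x)=\lambda^{N/2}u(\lambda x)$, which preserves radial symmetry. A change of variables gives $\|\nabla u^\lambda\|_{L^2}^2=\lambda^2\|\nabla u\|_{L^2}^2$ and
\[
\int_{\R^N}|x|^b|u^\lambda|^{p+1}\,dx=\lambda^{\frac N2(p+1)-N-b}\int_{\R^N}|x|^b|u|^{p+1}\,dx=\lambda^{B}\int_{\R^N}|x|^b|u|^{p+1}\,dx .
\]
Hence
\[
P(u^\lambda)=\lambda^2\|\nabla u\|_{L^2}^2-\frac{B}{p+1}\lambda^B\int_{\R^N}|x|^b|u|^{p+1}\,dx .
\]
Since $B>2$, $P(u^\lambda)>0$ for small $\lambda>0$, while $P(u^1)=P(u)<0$. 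By continuity there is $\lambda_0\in(0,1)$ with $P(u^{\lambda_0})=0$, so $d(\omega)\le S_\omega(u^{\lambda_0})=(S_\omega-\frac12P)(u^{\lambda_0})$.

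Finally, in $(S_\omega-\frac12P)(u^\lambda)$ the mass term does not depend on $\lambda$, and the potential term is $\frac{B-2}{2(p+1)}\lambda^B\int|x|^b|u|^{p+1}$, which is increasing in $\lambda$. Therefore $(S_\omega-\frac12P)(u^{\lambda_0})\le(S_\omega-\frac12P)(u)$. Taking the infimum over admissible $u$ yields $d(\omega)\le m(\omega)$.

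The only delicate point is bookkeeping rather than analysis: the argument needs $B>2$, that is, $p>p_c$, for two things. It gives the sign change of $P(u^\lambda)$ and the monotonicity of $S_\omega-\frac12P$ along the dilation. It also needs the identification $d(\omega)=\inf\{S_\omega: P=0\}$ from the preceding step, which is where the radial setting and compactness (Lemma~\ref{lem-comp-embe}) enter.
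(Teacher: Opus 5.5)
Your proof is correct and is essentially the paper's argument: the $L^2$-preserving dilation $u^\lambda(x)=\lambda^{N/2}u(\lambda x)$, the sign change of $P(u^\lambda)$ for $B>2$ to produce $\lambda_0\in(0,1)$ with $P(u^{\lambda_0})=0$, and the monotonicity of $(S_\omega-\tfrac12P)(u^\lambda)$ in $\lambda$ combined with $d(\omega)=\inf\{S_\omega(v):P(v)=0\}$. Your explicit proof of the easy inequality $m(\omega)\le d(\omega)$ and your restriction to $H^1_{\rm rad}$ and $p>p_c$ make precise what the paper leaves implicit; both restrictions are genuinely needed, since for example $m(\omega)=-\infty$ when $p<p_c$.
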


\begin{proof}
Let $m(\omega)$ denote the right-hand side. It suffices to prove that
$d(\omega)\leq m(\omega)$.

Fix $u\in H^1$ such that $P(u)<0$ and consider the $L^2$-invariant scaling
$u_\lambda(x):=\lambda^{\frac N2}u(\lambda x)$, $\lambda>0$. A direct computation yields
\begin{align*}
P(u_\lambda)
&=\lambda^2\Big(\|\nabla u\|^2-\frac{N(p-1)-2b}{2(p+1)}\lambda^{B-2}
\int_{\R^N}|x|^b|u|^{p+1}\,dx\Big),\\
\big(S_\omega-\tfrac12P\big)(u_\lambda)
&=\frac\omega2\|u\|^2+\frac{N(p-p_c)}{4(1+p)}\lambda^B
\int_{\R^N}|x|^b|u|^{p+1}\,dx.
\end{align*}
Since $B>2$, we have $P(u_\lambda)>0$ for $0<\lambda\ll 1$. By continuity of
$\lambda\mapsto P(u_\lambda)$, there exists $\lambda_0\in(0,1)$ such that
$P(u_{\lambda_0})=0$. Moreover, $\lambda\mapsto \big(S_\omega-\tfrac12P\big)(u_\lambda)$
is increasing, hence
\[
d(\omega)\le \big(S_\omega-\tfrac12P\big)(u_{\lambda_0})
\le \big(S_\omega-\tfrac12P\big)(u).
\]
Taking the infimum over all $u$ with $P(u)\le 0$ gives $d(\omega)\le m(\omega)$,
and the proof is complete.
\end{proof}

We now prove the last part of Theorem~\ref{Kpm}. Let
$u_0\in \mathcal{K}_\omega^-\cap L^2(|x|^2\,dx)$ and assume by contradiction that
the corresponding solution to \eqref{INLS} is global. Then, by invariance,
\[
u(t)\in \mathcal{K}_\omega^-\cap L^2(|x|^2\,dx)
\qquad\text{for all } t\ge 0.
\]
We claim that there exists $\eta>0$ such that
\[
P(u(t))\le -\eta
\qquad\text{for all sufficiently large } t.
\]
Otherwise, one can find a sequence $t_n\to\infty$ such that $P(u(t_n))\to 0$.
Applying the previous lemma, we obtain
\[
m(\omega)\le \big(S_\omega-\tfrac12P\big)(u(t_n))
= S_\omega(u_0)-\tfrac12P(u(t_n))
\longrightarrow S_\omega(u_0).
\]
Since $u_0\in \mathcal{K}_\omega^-$, we have $S_\omega(u_0)<d(\omega)=m(\omega)$,
and therefore the above limit implies
\[
m(\omega)\le S_\omega(u_0)<m(\omega),
\]
which is a contradiction. This concludes the proof.

\section{Proof of Theorem~\ref{Stab-Instab-GS}}
\label{S5}
\setcounter{equation}{0}

In this section, we prove the stability and instability statements for ground state
standing waves. We begin with a regularity property of the threshold level $d(\omega)$,
which will be used in the stability argument.

\begin{lemma}
\label{Stab-Lem}
The function $\omega \longmapsto d(\omega)$ is $C^\infty(0,\infty)$. Moreover, if
$p< 1+\frac{4+2b}{N}$, then $d''(\omega)>0$ for all $\omega>0$.
\end{lemma}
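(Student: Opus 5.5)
The plan is to compute $d(\omega)$ explicitly by scaling, using Theorem~\ref{G}. By Theorem~\ref{G}(ii)--(iii), $d(\omega)=S_\omega(Q_\omega)$, where $Q_\omega$ is the unique positive radial solution of \eqref{GS}. Let $Q=Q_1$. A direct check shows that
\[
Q_\omega(x)=\omega^{\frac{2+b}{2(p-1)}}Q(\sqrt{\omega}\,x)
\]
solves \eqref{GS}. It is positive and radial, so by Proposition~\ref{prop-unique} it is exactly $Q_\omega$.

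Next, compute each term of $S_\omega(Q_\omega)$ by changing variables $y=\sqrt\omega x$:
\[
\|\nabla Q_\omega\|_{L^2}^2=\omega^{\frac{2+b}{p-1}+1-\frac N2}\|\nabla Q\|_{L^2}^2,\qquad
\omega\|Q_\omega\|_{L^2}^2=\omega^{\frac{2+b}{p-1}+1-\frac N2}\|Q\|_{L^2}^2,
\]
\[
\int|x|^b|Q_\omega|^{p+1}dx=\omega^{\frac{(2+b)(p+1)}{2(p-1)}-\frac{N+b}{2}}\int|y|^b|Q|^{p+1}dy .
\]
The exponent in the last line simplifies to $\frac{2+b}{p-1}+1-\frac N2$ as well. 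Hence
\[
d(\omega)=d(1)\,\omega^{\gamma},\qquad \gamma:=\frac{2+b}{p-1}+1-\frac N2=1-s_c,
\]
and $d(1)>0$ by Lemma~\ref{d}. Smoothness on $(0,\infty)$ is then immediate, since a power function is $C^\infty$ there.

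For the convexity claim, compute
\[
d''(\omega)=d(1)\gamma(\gamma-1)\omega^{\gamma-2}.
\]
Here $\gamma-1=-s_c=\frac{2+b}{p-1}-\frac N2$, which is positive exactly when $p<1+\frac{4+2b}{N}=p_c$. In that case $\gamma>1>0$, so $d''(\omega)>0$.

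The only step needing care is the algebra of the potential-term exponent:
\[
\frac{(2+b)(p+1)}{2(p-1)}-\frac{N+b}{2}=\frac{2+b}{p-1}+\frac{2+b}{2}-\frac{N+b}{2},
\]
which indeed equals $\gamma$. One must also keep in mind that the uniqueness of $Q_\omega$ from Proposition~\ref{prop-unique} is what legitimizes identifying $d(\omega)$ with the scaled value. Alternatively, one could avoid uniqueness by noting that $v\mapsto \omega^{\frac{2+b}{2(p-1)}}v(\sqrt\omega\,\cdot)$ is a bijection from $\mathcal N_1$ onto $\mathcal N_\omega$ that multiplies $S$ by $\omega^\gamma$.
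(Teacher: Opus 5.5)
Your proof is correct and follows the same route as the paper: scale the $\omega=1$ ground state to get $d(\omega)=d(1)\,\omega^{\gamma}$, then read off smoothness and the sign of $d''$ from the exponent. Your simplification $\gamma=1-s_c$ makes the equivalence $\gamma>1\iff p<p_c$ more transparent than the paper's unsimplified exponent. Your bijection remark ($\mathcal N_1\to\mathcal N_\omega$ multiplying $S$ by $\omega^{\gamma}$) is also a valid way to avoid invoking uniqueness.
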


\begin{proof}
By a scaling argument, we have
\[
\phi_{\omega}(x)=\omega^{\frac{2+b}{2(p-1)}}\phi_1(\sqrt{\omega}\,x),
\]
where $\phi_1$ denotes a ground state of \eqref{GS} corresponding to $\omega=1$.
Consequently,
\[
d(\omega)=S_{\omega}(\phi_{\omega})
=\frac{p-1}{2(p+1)}\, \omega^{\frac{(2+b)(p+1)}{2(p-1)}-\frac{b+N}{2}}
\int_{\R^N}|x|^b|\phi_1(x)|^{p+1}\,dx.
\]
This explicit expression shows that $d\in C^\infty(0,\infty)$. Furthermore, if
$p< 1+\frac{4+2b}{N}$, then
\[
\frac{(2+b)(p+1)}{2(p-1)}-\frac{b+N}{2}>1,
\]
and therefore $d''(\omega)>0$ for all $\omega>0$. This completes the proof.
\end{proof}

We next establish a key deformation property of ground states, which will be used
to prove strong instability in the mass-supercritical case.

\begin{lemma}
\label{Instab-Lem}
Let $\omega >0$, $b>0$, $1+\frac{4+2b}{N}< p<\frac{N+2+2b}{N-2}$, and
$\phi\in \mathcal{G}_{\omega}$. For $\lambda>0$, define
\[
\phi^{\lambda}(x)={\rm e}^{\frac{N\lambda}{2}}\phi({\rm e}^{\lambda}\,x).
\]
Then
\[
\phi^{\lambda}\in \mathcal{K}_{\omega}^{-}\cap \Sigma.
\]
\end{lemma}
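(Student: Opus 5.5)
We need to show three things about $\phi^\lambda$ for $\lambda>0$: it lies in $\Sigma$, it satisfies $S_\omega(\phi^\lambda)<S_\omega(Q_\omega)$, and it satisfies $P(\phi^\lambda)<0$. By Theorem~\ref{G}(iii), $\phi=e^{i\theta}Q_\omega$, so we may take $\phi=Q_\omega$. Write $\mu=e^\lambda>1$, so that $\phi^\lambda(x)=\mu^{N/2}\phi(\mu x)$ is the $L^2$-invariant dilation already used in Section~\ref{S6}'s scaling remark and in the auxiliary lemma of Section~\ref{S4}.

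\textbf{Membership in $\Sigma$.} Since $\phi^\lambda$ is a dilation of $\phi$, it suffices that $Q_\omega\in L^2(|x|^2dx)$. This follows from the exponential decay of the positive radial solution $Q_\omega$. Exponential decay follows from standard ODE/comparison arguments for \eqref{GS} with $\omega>0$, using the radial decay $|x|^{b}|Q_\omega|^{p-1}\to 0$ (Strauss-type bound combined with $p>1+\frac{2b}{N-1}$). Radiality is preserved by the dilation, so $\phi^\lambda\in H^1_{\rm rad}\cap\Sigma$.

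\textbf{The action and $P$ along the dilation.} Set $A=\|\nabla\phi\|^2$ and $D=\int|x|^b|\phi|^{p+1}$. Then
\[
f(\mu):=S_\omega(\phi^\lambda)=\frac{\mu^2}{2}A+\frac\omega2\|\phi\|^2-\frac{\mu^{B}}{p+1}D,
\]
where $B$ is given by \eqref{B}; here $B>2$ exactly because $p>p_c$. Moreover $\mu f'(\mu)=\mu^2A-\frac{B}{p+1}\mu^BD=P(\phi^\lambda)$. By the Pohozaev identity \eqref{Pohoz4}, $A=\frac{B}{p+1}D$, so
\[
P(\phi^\lambda)=\frac{B}{p+1}D\,(\mu^2-\mu^B)<0\qquad\text{for }\mu>1,
\]
since $B>2$ and $D>0$.

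\textbf{The action inequality.} Because $f'(\mu)<0$ for $\mu>1$, $f$ is strictly decreasing on $[1,\infty)$. Hence
\[
S_\omega(\phi^\lambda)=f(e^\lambda)<f(1)=S_\omega(\phi)=S_\omega(Q_\omega)\qquad\text{for }\lambda>0.
\]
Together with $P(\phi^\lambda)<0$ this gives $\phi^\lambda\in\mathcal K_\omega^-$.

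The only delicate step is justifying $Q_\omega\in L^2(|x|^2dx)$, which needs the decay of $Q_\omega$ despite the growing weight; the algebra above is routine.
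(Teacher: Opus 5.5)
Your proposal is correct and follows the paper's own argument. Both compute the action along the $L^2$-invariant dilation and use the Pohozaev identity \eqref{Pohoz4}, i.e.\ $P(\phi)=0$, to get $\partial_\lambda S_\omega(\phi^\lambda)=P(\phi^\lambda)=e^{2\lambda}\bigl(1-e^{(B-2)\lambda}\bigr)\|\nabla\phi\|_{L^2}^2<0$ for $\lambda>0$, since $B>2$ exactly when $p>p_c$; both then conclude $P(\phi^\lambda)<0$ and $S_\omega(\phi^\lambda)<S_\omega(\phi)$. Your decay argument for $Q_\omega\in L^2(|x|^2dx)$ (the Strauss bound gives $|x|^bQ_\omega^{p-1}\to 0$ precisely when $p>1+\frac{2b}{N-1}$) is a useful addition, since the paper simply asserts $\phi\in\Sigma$.
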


\begin{proof}
Clearly $\phi^{\lambda}\in \Sigma$ since $\phi\in \Sigma$. A direct computation gives
\[
S_{\omega}(\phi^{\lambda})
=\frac{{\rm e}^{2\lambda}}{2}\|\nabla\phi\|_{L^2}^2+\frac{\omega}{2}\|\phi\|_{L^2}^2
-\frac{{\rm e}^{\frac{\lambda}{2}(N(p-1)-2b)}}{p+1}
\int_{\R^N}|x|^b|\phi(x)|^{p+1}\,dx.
\]
Using $P(\phi)=0$, we obtain
\begin{equation}
\label{S-lambda}
\partial_{\lambda}\,S_{\omega}(\phi^{\lambda})
={\rm e}^{2\lambda}\bigg(1-{\rm e}^{\frac{\lambda}{2}(Np-(N+4+2b))}\bigg)\,
\|\nabla\phi\|_{L^2}^2.
\end{equation}
Since $1+\frac{4+2b}{N}<p$, we have $Np-(N+4+2b)>0$, and thus
$\partial_{\lambda}\,S_{\omega}(\phi^{\lambda})<0$ for all $\lambda>0$.
Consequently,
\[
S_{\omega}(\phi^{\lambda})<S_{\omega}(\phi^{0})=S_{\omega}(\phi)
\qquad\text{for all }\lambda>0.
\]
Finally, observing that $\partial_{\lambda}S_{\omega}(\phi^{\lambda})=P(\phi^{\lambda})$,
we conclude that $P(\phi^{\lambda})<0$ for all $\lambda>0$, and therefore
$\phi^{\lambda}\in \mathcal{K}_{\omega}^{-}\cap \Sigma$.
\end{proof}

To treat the mass-critical case, namely $p=1+\frac{4+2b}{N}$, we use the following lemma.

\begin{lemma}
\label{Mass-Crit}
Let $b>0$, $\omega>0$, $p=1+\frac{4+2b}{N}$, and $\phi\in\mathcal{G}_{\omega}$.
Let $(\lambda_n)$ be a sequence in $(1,\infty)$ such that $\lim\lambda_n=1$, and define
\[
\phi_n(x)=\lambda_n^{1+N/2}\,\phi(\lambda_n\,x).
\]
Then
\begin{equation}
\label{Mass-Crit1}
\|\phi_n-\phi\|_{H^1}\to 0\quad\mbox{as}\quad n\to\infty,
\end{equation}
and
\begin{equation}
\label{Mass-Crit2}
E(\phi_n)<0, \;\;\;\forall\;n\geq 1.
\end{equation}
\end{lemma}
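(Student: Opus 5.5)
We prove the two assertions of Lemma~\ref{Mass-Crit} separately, using only scaling and the Pohozaev identity of Remark~\ref{rm1}.

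\textbf{The convergence \eqref{Mass-Crit1}.} The map $\lambda\mapsto \lambda^{1+N/2}\phi(\lambda\,\cdot)$ is continuous from $(0,\infty)$ into $H^1$ at $\lambda=1$. To see this, split
\[
\phi_n-\phi=(\lambda_n^{1+N/2}-1)\,\phi(\lambda_n\cdot)+\big(\phi(\lambda_n\cdot)-\phi\big).
\]
The first term tends to zero because $\|\phi(\lambda_n\cdot)\|_{H^1}$ is bounded uniformly for $\lambda_n$ near $1$, by a change of variables. The second term tends to zero by continuity of dilations in $L^2$, applied to both $\phi$ and $\nabla\phi$. That continuity is standard: approximate by $C_c^\infty$ functions, for which the convergence is uniform on a fixed compact set, and use the uniform bound on the dilation operators.

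\textbf{The sign \eqref{Mass-Crit2}.} We compute $E(\phi_n)$ explicitly. Changing variables gives
\[
\|\nabla\phi_n\|_{L^2}^2=\lambda_n^{4}\|\nabla\phi\|_{L^2}^2 .
\]
For the weighted term, set $y=\lambda_n x$, so that $|x|^b=\lambda_n^{-b}|y|^b$ and $dx=\lambda_n^{-N}dy$. Then
\[
\int_{\R^N}|x|^b|\phi_n|^{p+1}\,dx=\lambda_n^{(1+N/2)(p+1)-b-N}\int_{\R^N}|x|^b|\phi|^{p+1}\,dx .
\]
With $p+1=2+\frac{4+2b}{N}$, the exponent is
\[
(1+N/2)(p+1)-b-N=(1+N/2)\,\frac{2N+4+2b}{N}-b-N=4+\frac{4+2b}{N}=3+p .
\]
Hence
\[
E(\phi_n)=\frac{\lambda_n^4}{2}\|\nabla\phi\|_{L^2}^2-\frac{\lambda_n^{p+3}}{p+1}\int_{\R^N}|x|^b|\phi|^{p+1}\,dx .
\]

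In the mass-critical case $B=2$, so \eqref{Pohoz4} (equivalently $P(\phi)=0$) reads
\[
\|\nabla\phi\|_{L^2}^2=\frac{2}{p+1}\int_{\R^N}|x|^b|\phi|^{p+1}\,dx .
\]
In particular $E(\phi)=0$. Substituting this relation gives
\[
E(\phi_n)=\frac{\lambda_n^4}{2}\,\|\nabla\phi\|_{L^2}^2\,\big(1-\lambda_n^{p-1}\big).
\]
This is strictly negative because $\lambda_n>1$, $p>1$, and $\nabla\phi\neq0$ (the latter since $\phi\neq0$ lies in $H^1$).

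\textbf{Where care is needed.} The proof uses the ground state only through $P(\phi)=0$, which every solution of \eqref{GS} satisfies by Lemma~\ref{Pohoz}. The only step that needs care is the exponent arithmetic in the weighted term, where the extra factor $\lambda^{-b}$ coming from $|x|^b$ must be tracked.
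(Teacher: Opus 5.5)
Your proof is correct, and the sign part is the same as the paper's; only the argument for \eqref{Mass-Crit1} takes a different route.

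\textbf{The sign part.} This matches the paper step for step. You use the same scaling factors $\lambda_n^4$ and $\lambda_n^{(1+N/2)(p+1)-b-N}=\lambda_n^{p+3}$. You then apply \eqref{Pohoz4} with $B=2$. The paper's formula $E(\phi_n)=\frac{\lambda_n^4}{p+1}\bigl(1-\lambda_n^{(4+2b)/N}\bigr)\int_{\R^N}|x|^b|\phi|^{p+1}\,dx$ is your expression rewritten via $\|\nabla\phi\|_{L^2}^2=\frac{2}{p+1}\int_{\R^N}|x|^b|\phi|^{p+1}\,dx$.

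\textbf{The convergence \eqref{Mass-Crit1}.} The paper simply invokes the Brezis--Lieb lemma. It uses the explicit identities $\|\phi_n\|_{L^2}=\lambda_n\|\phi\|_{L^2}$ and $\|\nabla\phi_n\|_{L^2}=\lambda_n^2\|\nabla\phi\|_{L^2}$, so the $H^1$ norms converge. It combines these with pointwise convergence of $\phi_n$ and $\nabla\phi_n$, which holds because $\phi$ is $C^1$ by elliptic regularity. You instead prove strong continuity of dilations on $H^1$, using density of $C_c^\infty$ and uniform boundedness of the dilation operators.

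\textbf{What each route buys.} Your route is self-contained and works for every $\phi\in H^1$, with no regularity of the ground state needed. The paper's route is shorter because it reuses the scaling identities it computes anyway. However, it tacitly relies on a.e.\ convergence of $\nabla\phi_n$, which is where the regularity of $\phi$ enters.

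\textbf{A small bookkeeping point.} Note that $\nabla[\phi(\lambda_n\cdot)]=\lambda_n(\nabla\phi)(\lambda_n\cdot)$. So applying continuity of dilations to $\nabla\phi$ needs one more trivial split to absorb the factor $\lambda_n\to1$.
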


\begin{proof}
A direct computation yields
\begin{eqnarray}
\label{L2}
\|\phi_n\|_{L^2}&=&\lambda_n\,\|\phi\|_{L^2},\\
\label{nabla-L2}
\|\nabla\phi_n\|_{L^2}&=&\lambda_n^2\,\|\nabla\phi\|_{L^2},\\
\label{x-phi-n}
\int_{\R^N}|x|^b|\phi_n(x)|^{p+1}\,dx&=&
\lambda_n^{\frac{(N+2)(p+1)}{2}-b-N}
\int_{\R^N}|x|^b|\phi(x)|^{p+1}\,dx.
\end{eqnarray}
In particular, \eqref{Mass-Crit1} follows from Brezis--Lieb's lemma \cite{Bre-Lieb}.

To prove \eqref{Mass-Crit2}, we use \eqref{nabla-L2}--\eqref{x-phi-n} to obtain
\begin{equation}
\label{E-phi-n}
E(\phi_n)
=\frac{\lambda_n^4}{2}\|\nabla\phi\|_{L^2}^2
-\frac{\lambda_n^{\frac{(N+2)(p+1)}{2}-b-N}}{p+1}
\int_{\R^N}|x|^b|\phi(x)|^{p+1}\,dx.
\end{equation}
Using \eqref{Pohoz4} and the fact that $p=1+\frac{4+2b}{N}$, we can rewrite
\eqref{E-phi-n} as
\begin{equation}
\label{E-phi-nn}
E(\phi_n)
=\frac{\lambda_n^4}{p+1}\left(1-\lambda_n^{\frac{4+2b}{N}}\right)
\int_{\R^N}|x|^b|\phi(x)|^{p+1}\,dx.
\end{equation}
Since $\lambda_n>1$, this implies $E(\phi_n)<0$ for all $n\ge 1$, proving \eqref{Mass-Crit2}.
\end{proof}

We are now in a position to complete the proof of Theorem~\ref{Stab-Instab-GS}.

\begin{itemize}
\item \underline{\bf Proof of Part (i).}\\
Let $\phi\in G_{\omega}$. By Lemma~\ref{Instab-Lem}, we have
$\phi^{\lambda}\in \mathcal{K}_{\omega}^-\cap\Sigma$ for every $\lambda>0$.
Therefore, by Part~(iii) of Theorem~\ref{Kpm}, the corresponding solution to
\eqref{INLS} with initial datum $\phi^{\lambda}$ blows up in finite time.
Since $\|\phi^{\lambda}-\phi\|_{H^1}\to 0$ as $\lambda\to 0$, we conclude that the
standing wave ${\rm e}^{i\omega t}\phi(x)$ is strongly unstable.

\item \underline{\bf Proof of Part (ii).}\\
This follows by combining Lemma~\ref{Mass-Crit} with \cite[Theorem~1.3]{DMS2021}.

\item \underline{\bf Proof of Part (iii).}\\
The proof relies on Lemma~\ref{Stab-Lem} together with an adaptation of the
variational argument in \cite[Proof of Theorem~1.5(i), p.~117]{CPAM2021}.
We omit the details.
\end{itemize}
    \section{Proof of Theorem~\ref{Norm-SW-Subcrit}}
\label{S6}
\setcounter{equation}{0}

We begin with the proof of Proposition~\ref{scal}, which provides a scaling
construction of normalized solutions to \eqref{GS}.

\begin{proof}
Let $Q$ be the unique positive radial solution to \eqref{GS} with $\omega=1$
(see~\cite[Proposition 2.3]{DMS2021}). For $\lambda>0$ (to be fixed later), define
\[
\phi(x)=\lambda^{\frac{2+b}{p-1}}\,Q(\lambda x).
\]
A straightforward computation shows that $\phi$ solves
\[
-\Delta\phi+\lambda^2\phi=|x|^b\phi^p,
\]
and moreover
\[
\|\phi\|_{L^2}^2=\lambda^{\frac{N}{p-1}(p_c-p)}\,\|Q\|_{L^2}^2.
\]
Choosing
\[
\lambda:=\left(c\|Q\|_{L^2}^{-2}\right)^{\frac{p-1}{N(p_c-p)}},
\]
we obtain a function
\[
\phi_c(x)=\lambda^{\frac{2+b}{p-1}}\,Q(\lambda x)
\]
which solves \eqref{GS} with
\[
\omega=\omega_c=\left(c\|Q\|_{L^2}^{-2}\right)^{\frac{2(p-1)}{N(p_c-p)}},
\qquad\text{and satisfies}\qquad
\|\phi_c\|_{L^2}^2=c.
\]
The remaining assertions follow immediately.
\end{proof}

Throughout the rest of this section, we assume that
\[
b>0,\qquad N>1+\frac{b}{2},\qquad c>0,\qquad 1+\frac{2b}{N-1}<p<p_c.
\]

\begin{itemize}
\item \underline{\bf Proof of Part (i) of Theorem~\ref{Norm-SW-Subcrit}.}\\
The proof is divided into two steps.

\medskip
\noindent\textbf{Step 1.} We claim that
\begin{equation}
\label{m-c-0}
-\infty<\mathbf{m}(c)<0.
\end{equation}
Let $\phi\in \mathcal{S}(c)$ and $\mu>0$, and set $\phi_{\mu}(x)=\mu^{\frac{N}{2}}\phi(\mu x)$.
Then $\phi_{\mu}\in \mathcal{S}(c)$ and
\[
E(\phi_{\mu})
=\mu^2\bigg(\frac{1}{2}\|\nabla\phi\|_{L^2}^2
-\frac{\mu^{\kappa}}{p+1}\int_{\R^N}|x|^b|\phi(x)|^{p+1}\,dx\bigg),
\]
where $\kappa:=\frac{N}{2}(p+1)-N-b-2=\frac N2(p-p_c)<0$ since $p<p_c$.
Hence $E(\phi_{\mu})<0$ for $\mu>0$ small enough, and therefore $\mathbf{m}(c)<0$.

Next, using the Gagliardo--Nirenberg inequality \eqref{GNI}, we have for $\phi\in \mathcal{S}(c)$
\begin{eqnarray*}
\frac{1}{p+1}\int_{\R^N}|x|^b|\phi(x)|^{p+1}\,dx
&\lesssim& \|\phi\|_{L^2}^{\frac{4+2b-(N-2)(p-1)}{2}}\,
\|\nabla\phi\|_{L^2}^{\frac{N(p-1)-2b}{2}}\\
&\lesssim& \|\nabla\phi\|_{L^2}^{\frac{N(p-1)-2b}{2}}
\le \frac{1}{4}\|\nabla\phi\|_{L^2}^2+K,
\end{eqnarray*}
where $0<K<\infty$ depends only on $b,c,N,p$. Here we used Young's inequality
\[
xy\leq \frac{x^q}{4}+\left(1-\frac{1}{q}\right)\left(\frac{q}{4}\right)^{\frac{1}{1-q}}\,
y^{\frac{q}{q-1}},\qquad x,y\ge 0,\quad q>1,
\]
together with the assumption $p<1+\frac{4+2b}{N}$. It follows that
\[
E(\phi)\ge \frac{1}{4}\|\nabla\phi\|_{L^2}^2-K\ge -K,
\]
for all $\phi\in \mathcal{S}(c)$, and thus $\mathbf{m}(c)>-\infty$.
This proves \eqref{m-c-0}.

\medskip
\noindent\textbf{Step 2.} \emph{Existence of a minimizer for $\mathbf{m}(c)$.}
Let $(\phi_n)\subset H^1_{\rm rad}(\mathbb R^N)$ be a minimizing sequence for $\mathbf{m}(c)$, namely
\begin{equation}\label{6.1}
\phi_n\in S(c), \qquad E(\phi_n)\to \mathbf{m}(c).
\end{equation}
By Step~1, $(\phi_n)$ is bounded in $H^1_{\rm rad}(\mathbb R^N)$. Hence, up to a subsequence,
\[
\phi_n\rightharpoonup \phi \quad \text{weakly in } H^1(\mathbb R^N),
\qquad
\phi_n(x)\to \phi(x) \quad \text{a.e. in } \mathbb R^N.
\]
Moreover, by Lemma~\ref{lem-comp-embe}, the embedding
$H^1_{\rm rad}(\mathbb R^N)\hookrightarrow L^{p+1}(\mathbb R^N,|x|^b\,dx)$
is compact. Therefore,
\begin{equation}\label{6.2}
\phi_n \to \phi \quad \text{strongly in } L^{p+1}(\mathbb R^N,|x|^b\,dx),
\end{equation}
and in particular
\[
\int_{\mathbb R^N}|x|^b|\phi_n|^{p+1}\,dx
\longrightarrow
\int_{\mathbb R^N}|x|^b|\phi|^{p+1}\,dx .
\]

We now show that $\|\phi\|_{L^2}^2=c$. By weak lower semicontinuity,
\[
\|\phi\|_{L^2}^2 \le \liminf_{n\to\infty}\|\phi_n\|_{L^2}^2 = c.
\]
Assume by contradiction that $\|\phi\|_{L^2}^2=:c_1<c$. Then $\phi\in S(c_1)$ and hence
$E(\phi)\ge \mathbf{m}(c_1)$.

We claim that $\mathbf{m}(c)$ is strictly decreasing in $c$. Indeed, let $0<c_1<c_2$ and
$u\in S(c_1)$. Set $\alpha=\sqrt{c_2/c_1}>1$, so that $\alpha u\in S(c_2)$. Since $p+1>2$,
\[
E(\alpha u)
= \frac{\alpha^2}{2}\|\nabla u\|_{L^2}^2
- \frac{\alpha^{p+1}}{p+1}\int_{\mathbb R^N}|x|^b|u|^{p+1}\,dx
< \alpha^2 E(u).
\]
Taking the infimum over $u\in S(c_1)$ yields
\[
\mathbf{m}(c_2) < \alpha^2 \mathbf{m}(c_1).
\]
Since $\mathbf{m}(c_1)<0$ by Step~1 and $\alpha^2>1$, it follows that
$\mathbf{m}(c_2)<\mathbf{m}(c_1)$.

Applying this with $c_1<c$, we obtain $\mathbf{m}(c)<\mathbf{m}(c_1)$.
On the other hand, by \eqref{6.2} and weak lower semicontinuity of the gradient norm,
\[
\|\nabla\phi\|_{L^2}^2 \le \liminf_{n\to\infty}\|\nabla\phi_n\|_{L^2}^2,
\]
and therefore
\[
E(\phi)\le \liminf_{n\to\infty}E(\phi_n)=\mathbf{m}(c).
\]
Hence,
\[
\mathbf{m}(c)\ge E(\phi)\ge \mathbf{m}(c_1),
\]
which contradicts $\mathbf{m}(c)<\mathbf{m}(c_1)$. Thus $\|\phi\|_{L^2}^2=c$.

Consequently, $\phi\in S(c)$ and
\[
\mathbf{m}(c)\le E(\phi)\le \liminf_{n\to\infty}E(\phi_n)=\mathbf{m}(c),
\]
so $E(\phi)=\mathbf{m}(c)$ and $\phi\in \mathbb{M}_c$.

Finally, since $\|\phi_n\|_{L^2}\to\|\phi\|_{L^2}$ and $\phi_n\rightharpoonup\phi$ in $L^2$,
we have $\phi_n\to\phi$ strongly in $L^2(\mathbb R^N)$. Using \eqref{6.2} and
$E(\phi_n)\to E(\phi)$, we also get
\[
\|\nabla\phi_n\|_{L^2}^2
= 2E(\phi_n)+\frac{2}{p+1}\int_{\mathbb R^N}|x|^b|\phi_n|^{p+1}\,dx
\longrightarrow \|\nabla\phi\|_{L^2}^2,
\]
hence $\phi_n\to\phi$ strongly in $H^1(\mathbb R^N)$. This completes the proof of
Part~(i) of Theorem~\ref{Norm-SW-Subcrit}.

\medskip
\noindent\textbf{Proof of Part (ii) of Theorem~\ref{Norm-SW-Subcrit}.}\\
Assume, by contradiction, that $\mathbb{M}_c$ is not orbitally stable in the sense of
Definition~\ref{Orb-Stab-NSW}. Then there exist $\varepsilon_0>0$, a sequence of initial data
$(u_{0,n})\subset H^1(\mathbb R^N)$, and a sequence of times $t_n>0$ such that
\begin{equation}\label{6.4}
\inf_{\phi\in \mathbb{M}_c}\|u_{0,n}-\phi\|_{H^1}\longrightarrow 0
\quad \text{as } n\to\infty,
\end{equation}
but
\begin{equation}\label{6.5}
\inf_{\phi\in \mathbb{M}_c}\|u_n(t_n)-\phi\|_{H^1}\ge \varepsilon_0
\quad \text{for all } n,
\end{equation}
where $u_n(t)$ denotes the solution of \eqref{INLS} with initial data $u_{0,n}$.

By conservation of mass and energy along the flow of \eqref{INLS}, we have
\[
\|u_n(t_n)\|_{L^2}^2=\|u_{0,n}\|_{L^2}^2
\quad \text{and} \quad
E(u_n(t_n))=E(u_{0,n}).
\]
From \eqref{6.4} and the fact that every element of $\mathbb{M}_c$ has mass $c$ and energy
$\mathbf{m}(c)$, it follows that
\begin{equation}\label{6.6}
\|u_n(t_n)\|_{L^2}^2 \longrightarrow c,
\qquad
E(u_n(t_n)) \longrightarrow \mathbf{m}(c).
\end{equation}

Define the normalization factor
\[
\alpha_n := \sqrt{\frac{c}{\|u_n(t_n)\|_{L^2}^2}},
\]
and set
\[
v_n := \alpha_n\,u_n(t_n).
\]
Then $\|v_n\|_{L^2}^2=c$ for all $n$, and by \eqref{6.6} we have $\alpha_n\to 1$ as
$n\to\infty$. Moreover, since the map $\alpha\mapsto E(\alpha u)$ is continuous for fixed
$u\in H^1(\mathbb R^N)$, we obtain
\[
E(v_n)-E(u_n(t_n))\longrightarrow 0.
\]
Combining this with \eqref{6.6}, we conclude that
\[
E(v_n)\longrightarrow \mathbf{m}(c),
\]
so that $(v_n)\subset S(c)$ is a minimizing sequence for $\mathbf{m}(c)$.

By Part~(i) of Theorem~\ref{Norm-SW-Subcrit}, there exist a subsequence (still denoted $(v_n)$)
and $\phi\in \mathbb{M}_c$ such that
\[
v_n \longrightarrow \phi \quad \text{strongly in } H^1(\mathbb R^N).
\]
Since $\alpha_n\to 1$, it follows that
\[
u_n(t_n)=\alpha_n^{-1}v_n \longrightarrow \phi
\quad \text{strongly in } H^1(\mathbb R^N).
\]
Hence,
\[
\inf_{\psi\in \mathbb{M}_c}\|u_n(t_n)-\psi\|_{H^1}
\le \|u_n(t_n)-\phi\|_{H^1}\longrightarrow 0,
\]
which contradicts \eqref{6.5}. Therefore $\mathbb{M}_c$ is orbitally stable, and the proof
of Part~(ii) of Theorem~\ref{Norm-SW-Subcrit} is complete.
\end{itemize}
\section{Concluding Remarks}
\label{sec:concl}
In this paper we studied the focusing inhomogeneous nonlinear Schrödinger equation
\eqref{INLS} with spatially growing nonlinearity ($b>0$), with particular emphasis on
the variational construction of ground state standing waves and their role as
threshold objects for the dynamics. A central difficulty in the case $b>0$ is that
the weight $|x|^b$ destroys the usual translation-invariant structure of the
homogeneous NLS, and this substantially modifies both the variational framework and
the dynamical analysis. In the radial setting, however, the growth of the weight at
infinity yields a compactness mechanism which is crucial in the minimization
procedures and in the characterization of ground states.

At fixed frequency $\omega>0$, we established existence and qualitative properties
of ground states in $\mathcal{G}_\omega$, and we derived a sharp dichotomy between
global existence and finite-time blow-up in the spirit of the potential-well theory.
More precisely, the sets $\mathcal{K}_\omega^{+}$ and $\mathcal{K}_\omega^{-}$ provide
a natural partition of the phase space below the ground state level $d(\omega)$,
leading respectively to global existence or blow-up under the corresponding
assumptions. This yields a coherent description of the dynamics near the ground
state manifold, and highlights the decisive influence of the exponent $p$ through
the mass-subcritical, mass-critical, and mass-supercritical regimes.

We also investigated normalized standing waves, that is, standing waves with a
prescribed mass $c>0$. In the mass-subcritical range, we proved that the constrained
variational problem at fixed mass admits minimizers and that the associated set
$\mathbb{M}_c$ is orbitally stable. The proof relies on a constrained compactness
argument for minimizing sequences in the radial energy space, together with a
standard stability-by-contradiction strategy based on the conservation laws of
\eqref{INLS}. In contrast, in the mass-supercritical regime, the energy becomes
unbounded from below on the $L^2$-sphere, which forces one to adopt a modified
constraint in order to recover meaningful variational information.

Several natural directions remain open. It would be interesting to understand the
endpoint and limiting regimes where compactness deteriorates, as well as to extend
the analysis beyond the radial framework, where the lack of compactness becomes more
severe. Another challenging question concerns normalized standing waves in the
mass-critical and mass-supercritical cases, where the constrained minimization
problem requires additional structure and where the stability picture is expected to
be more subtle. Finally, the approach developed here suggests that similar questions
could be addressed for more general inhomogeneous coefficients $K(x)$, and it would
be worthwhile to investigate how different growth or decay behaviors at infinity
affect the existence and dynamical properties of standing waves.




\end{document}